\numberwithin{equation}{section}
\theoremstyle{plain}
\newtheorem{Th}{Theorem}[section]
\newtheorem{Lemma}[Th]{Lemma}
\newtheorem{Cor}[Th]{Corollary}
\newtheoremstyle{named}{}{}{\itshape}{}{\bfseries}{.}{.5em}{\thmnote{#3}}
\theoremstyle{named}
\theoremstyle{definition}
\newtheorem{Def}[Th]{Definition}
\newcommand{\vol}{{\rm{vol}}}
\newcommand{\tr}{{\rm{tr}}}
\newcommand{\ww}{\mathtt{w}}
\newcommand{\dvol}{\operatorname{dvol}}
\definecolor{ao(english)}{rgb}{0.0, 0.5, 0.0}
\renewcommand\Re{\operatorname{Re}}
\begin{document}

\title[Volume comparison on finite-volume hyperbolic 3-manifolds]{Volume comparison on finite-volume hyperbolic 3-manifolds}

\author{Ruojing Jiang}

\address{Massachusetts Institute of Technology, Department of Mathematics, Cambridge, MA 02139} 
\email{ruojingj@mit.edu}

\author{Franco Vargas Pallete}
\address{School of Mathematical and Statistical Sciences, Arizona State University, Tempe, AZ 85287}
\email{fevargas@asu.edu}

 \subjclass[2020]{} 
 \date{}

\begin{abstract}  
On finite-volume hyperbolic $3$-manifolds, we compare volumes of different metrics using the exponential convergence of Ricci-DeTurck flow toward the hyperbolic metric $h_0$. We prove that among metrics with scalar curvature bounded below by $-6$, $h_0$ minimizes the volume. Moreover, for metrics that are either uniformly $C^2$-close to $h_0$ or asymptotically cusped of order at least two, equality holds if and only if the metric is isometric to $h_0$.
\end{abstract}

\maketitle

\section{Introduction}

This paper focuses on the applications of Ricci flow on hyperbolic $3$-manifolds of finite volume.
Specifically, we compare the volume of $M$ with respect to different metrics. A fundamental conjecture attributed to Schoen \cite{Schoen1989} posits that, on a closed $n$-manifold that admits a hyperbolic metric, the hyperbolic metric minimizes volume among all metrics with scalar curvature bounded below by $-n(n-1)$. In dimension three, this conjecture was resolved by Perelman as a consequence of his work on Ricci flow with surgery and the proof of the Geometrization Conjecture.

Building on this perspective, the Ricci flow has been employed as a powerful tool for deriving geometric and topological inequalities. Agol, Storm, and Thurston \cite{Agol-Storm-Thurston} used Ricci flow to establish volume comparison results for compact $3$-manifolds with boundary consisting of minimal surfaces. Their approach involves doubling the manifold and applying Perelman's techniques to the resulting closed manifolds. In higher dimensions, Hu, Ji, and Shi \cite{HuJiShi2020} investigated the volume comparison in the setting of strictly stable conformally compact Einstein manifolds. By analyzing the exponential convergence rate to Einstein metrics, they established volume minimizing properties for such metrics in dimensions $n\geq 4$.

On a hyperbolic $3$-manifold of finite volume, to obtain the volume comparison between different metrics, we use the Ricci flow with a specific version of surgery on cusped manifolds introduced by Bessi{\`e}res, Besson, and Maillot \cite{Bessieres-Besson-Maillot}. It is called \emph{Ricci flow with bubbling-off}, with assumption that the initial metric has a cusp-like structure. Their work indicates that, after a finite number of surgeries, the solution converges smoothly to the hyperbolic metric on compact sets. However, this convergence may fail to extend globally on $M$, since the cuspidal ends allow for nontrivial Einstein variations that can alter the asymptotic behavior. On the other hand, Bamler \cite{Bamler} showed that if the initial metric is a small $C^0$ perturbation of the hyperbolic metric, then the Ricci flow converges on compact sets and remains asymptotic to the same hyperbolic structure for all time. 

In \cite{Jiang-VargasPallete_RF}, the authors provided a more quantitative version of the stability of cusped hyperbolic manifolds under normalized Ricci-DeTurck flow. We impose additional conditions on the initial metric and use Bamler's stability result \cite{Bamler} to deal with trivial Einstein variations. The strategy uses maximal regularity theory and interpolation techniques, following the approach of Angenent~\cite{Angenent_1990}, which extends the work of Da Prato and Grisvard \cite{DaPratoGrisvard1975}. By working with a pair of densely embedded Banach spaces and an operator that generates a strongly continuous analytic semigroup, we obtain maximal regularity for solutions of the normalized Ricci-DeTurck flow. This framework enables us to derive exponential convergence to the hyperbolic metric, with optimal decay rate given by the spectral estimate of the linearized operator.

\subsection{Main results}
On a finite-volume hyperbolic $3$-manifold, we showed in \cite{Jiang-VargasPallete_RF} that if the initial metric $h$ is $C^0$-close to the hyperbolic metric $h_0$, then the normalized Ricci-DeTurck flow exists for all time and converges exponentially fast to $h_0$ in a weighted H{\"o}lder norm for $t\geq 1$. In Theorem~\ref{thm_ricci_flow} below, we establish a new version that yields the convergence rate for all $t \geq 0$, under the stronger assumption that $h$ is $C^2$-close to $h_0$.
This attractivity result further leads to a comparison of the volume of $M$ with respect to different metrics. 

To introduce the theorem, we need the following definition. 

\renewcommand{\theTh}{\thesection.\arabic{Th}}
\begin{Def}\label{def_asymptotically_cusp}
A Riemannian metric $h$ on $M$ is said to be \emph{asymptotically cusped of order $k$} if there exist a constant $\lambda>0$ and a hyperbolic metric $h_{cusp}$ defined on the cusp $\mathcal{C}=\cup_iT_i\times [0,\infty)$, such that $\lambda h|_{\mathcal{C}}-h_{cusp}$ tends to zero at infinity in $C^k$.
\end{Def}

\begin{Th}\label{thm_volume_comparison}
    Let $(M,h_0)$ be a hyperbolic $3$-manifold of finite volume, and let $h$ be a Riemannian metric on $M$ with scalar curvature $R(h)\geq -6$. Then \begin{equation*}
        \vol_h(M)\geq \vol_{h_0}(M).
    \end{equation*} 
    Furthermore, suppose that $h$ either satisfies $\Vert h-h_0\Vert _{C^2(M)}\leq \epsilon$ for a given constant $\epsilon>0$, or it is asymptotically cusped of order at least two. Then the equality holds if and only if $h$ is isometric to $h_0$.
\end{Th}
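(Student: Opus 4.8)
The plan is to combine the exponential convergence of the normalized Ricci-DeTurck flow (Theorem \ref{thm_ricci_flow}) with a monotonicity argument for the volume functional along the flow. First I would recall that the normalized Ricci-DeTurck flow is the normalized Ricci flow pulled back by a time-dependent diffeomorphism, so volumes agree: $\vol_{g(t)}(M)=\vol_{\tilde g(t)}(M)$ where $\tilde g(t)$ solves normalized Ricci flow. Under normalized Ricci flow $\partial_t g = -2\operatorname{Ric}(g) - \frac{2}{3}\left(\text{something in } R\right) g$ tuned so that hyperbolic metrics are fixed points, one computes $\frac{d}{dt}\vol_{g(t)}(M) = \int_M \left(-R(g(t)) - 6\right)\frac{3}{2}\,\text{(up to the right constant)}\, \dvol_{g(t)}$; the key point is that the scalar-curvature lower bound $R\geq -6$ is \emph{preserved} along the flow (this is a standard maximum-principle fact for Ricci flow, valid on cusped manifolds once we have the a priori control from Theorem \ref{thm_ricci_flow} ensuring the flow stays in a controlled class and the relevant parabolic maximum principle applies). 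Hence $\frac{d}{dt}\vol_{g(t)}(M)\le 0$, so $\vol_h(M)=\vol_{g(0)}(M)\ge \lim_{t\to\infty}\vol_{g(t)}(M)=\vol_{h_0}(M)$, the last equality because $g(t)\to h_0$ exponentially in a weighted norm strong enough to pass volumes to the limit.

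For the general inequality (without closeness hypotheses) one cannot run the flow directly from an arbitrary $h$, so there I would instead invoke the known resolution in dimension three — Perelman's work via Ricci flow with surgery / Geometrization, exactly as cited for Schoen's conjecture in the introduction — or, to keep the argument self-contained in the finite-volume setting, approximate: the strict inequality $\vol_h(M)>\vol_{h_0}(M)$ fails only on a set where $R\equiv -6$ and then one reduces to the rigidity case. Concretely I would first establish the rigidity statement under the two stated regularity hypotheses and deduce the bare inequality by a perturbation/approximation of $h$ by metrics satisfying the $C^2$-closeness (after scaling), using lower semicontinuity of volume along the approximation.

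The rigidity half is the heart of the matter. Suppose $R(h)\ge -6$ and $\vol_h(M)=\vol_{h_0}(M)$, with $h$ either $C^2$-close to $h_0$ or asymptotically cusped of order $\ge 2$. In either case Theorem \ref{thm_ricci_flow} (in the cusped case, after normalizing by the constant $\lambda$ from Definition \ref{def_asymptotically_cusp} and applying the weighted-Hölder convergence of \cite{Jiang-VargasPallete_RF}, with Bamler's stability \cite{Bamler} handling the Einstein variation on the ends) gives a normalized Ricci-DeTurck flow $h(t)$ with $h(0)=h$ and $h(t)\to h_0$. By the monotonicity above, $t\mapsto \vol_{h(t)}(M)$ is nonincreasing, bounded below by $\vol_{h_0}(M)$, and equals $\vol_{h_0}(M)$ at $t=0$ and in the limit; hence it is constant, so $\frac{d}{dt}\vol_{h(t)}(M)\equiv 0$, which forces $R(h(t))\equiv -6$ for all $t\ge 0$. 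Feeding $R\equiv -6$ back into the normalized Ricci flow equation, $\partial_t g = -2\operatorname{Ric}(g)-4g = -2(\operatorname{Ric}(g)+2g)$, so $g(t)$ is a \emph{Ricci flow of Einstein-type evolution}; combined with $g(t)\to h_0$ this should force $\operatorname{Ric}(g(t))+2g(t)=0$ for all $t$, i.e. $g(t)$ is hyperbolic for every $t$, and in particular $h=g(0)$ is hyperbolic. A hyperbolic metric on $M$ with the same volume as $h_0$ is, by Mostow–Prasad rigidity for finite-volume hyperbolic $3$-manifolds, isometric to $h_0$.

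The main obstacle I anticipate is twofold: (i) justifying the differentiation-under-the-integral and the maximum principle for $R$ on the \emph{noncompact} cusped manifold — one needs the uniform asymptotic control from Theorem \ref{thm_ricci_flow} / the asymptotically-cusped hypothesis to guarantee no loss of volume "at infinity" and that the relevant quantities decay fast enough (order $\ge 2$ is presumably what makes the boundary terms vanish); and (ii) upgrading "$R(h(t))\equiv -6$ along the flow" to "$h(t)$ is hyperbolic" — this requires showing the evolution equation together with $R\equiv -6$ and the traceless Ricci part pins down the metric, which is where a Bochner-type or second-variation computation, or a direct argument that $|\operatorname{Ric}+2g|^2$ satisfies a parabolic equation forcing it to vanish, will be needed.
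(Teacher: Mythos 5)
Your rigidity argument in the $C^2$-close case is essentially the paper's: run the Ricci-DeTurck flow, use $\frac{d}{dt}\vol=-\int(R+6)\,\dvol$ plus exponential convergence to force $R(h(t))\equiv-6$, and then the evolution equation $\partial_t R=\Delta R+2|\mathrm{Ric}|^2+4R$ together with the equality case of Cauchy--Schwarz pins down $\mathrm{Ric}\equiv-2h$; the boundary/``loss at infinity'' issue you flag as obstacle (i) is exactly what the paper handles via a decay estimate for the DeTurck vector field $V$ (Lemma~\ref{lemma_deturck}) and the weighted H\"older framework. So that half of your proposal is on track, though you leave the two steps you flag as obstacles unproved rather than carried out.

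The genuine gap is in how you propose to get the unconditional inequality and the asymptotically cusped rigidity case. Approximating an arbitrary $h$ with $R(h)\geq-6$ by metrics that are $C^2$-close to $h_0$ ``after scaling'' is not possible: closeness to $h_0$ is an extremely restrictive condition, and a general $h$ (large volume, wild geometry on the thick part, cusps asymptotic to a different flat structure) admits no such approximation; likewise, invoking Perelman/Geometrization does not settle the cusped finite-volume case, which is precisely what the theorem is about. What is actually needed — and what the paper does — is to modify $h$ only in the cusps, producing metrics $h_i$ equal to $h$ on $M(s_i)$ and equal to $h_0$ far out, with $R(h_i)\geq-6$; these are cusp-like, so one can run the Ricci flow with bubbling-off of Bessi\`eres--Besson--Maillot, use its smooth convergence to $h_0$ on compact sets (and the stability of cusp-like structures) to reach, after the finitely many surgeries, a time $t_i$ at which the metric is $\rho_i$-close to $h_0$ in $C^2$, and only then switch to the Ricci-DeTurck flow and its exponential attractivity. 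The volume comparison must then be tracked through the surgeries (which decrease volume and preserve $R\geq-6$) and through the DeTurck boundary term $\int_{\partial M(s_i)}\langle V,\nu\rangle$, which is shown to contribute only $O(e^{-2s_i})$. Similarly, in the asymptotically cusped case your plan to ``normalize by $\lambda$ and apply Theorem~\ref{thm_ricci_flow}'' fails: such a metric need not be anywhere near $h_0$ on the thick part, and if its cusp asymptote differs from that of $h_0$ the flow never converges globally to $h_0$ (the paper points this out explicitly); the paper instead runs the bubbling-off flow for all time and derives rigidity from a strong-maximum-principle argument producing a strict volume drop on a compact piece if $h$ is not hyperbolic, contradicting volume equality. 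Without the surgery/bubbling-off mechanism and the boundary-term control, your route does not reach the general statement.
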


\subsection{Organization}
The paper is organized as follows. Section~\ref{section_bkg_rf} reviews the background on Ricci and Ricci-DeTurck flow and establishes the stability of the hyperbolic metric under $C^k$ perturbations. In Section~\ref{section_rf_conv}, we apply this stability result to derive exponential decay estimates toward the hyperbolic metric for all time, which are then used in the proof of Theorem~\ref{thm_volume_comparison}. Section~\ref{section_proof} contains the proof of Theorem~\ref{thm_volume_comparison}, and Section~\ref{section_applications} presents some brief applications.

\section*{Acknowledgements}
FVP thanks IHES for their hospitality during a phase of this work. FVP was partially funded by European Union (ERC, RaConTeich, 101116694)\footnote{Views and opinions expressed are however those of the author(s) only and do not necessarily reflect those of the European Union or the European Research Council Executive Agency. Neither the European Union nor the granting authority can be held responsible for them.}

\section{Background of Ricci flow}\label{section_bkg_rf}

In this section, we will briefly review the tools used to prove Theorem~\ref{thm_volume_comparison}.
\subsection{Normalized Ricci flow and Ricci-DeTurck flow}
The \emph{normalized Ricci flow} on $M$ is defined as \begin{equation}\label{RF}
    \frac{\partial h}{\partial t}=-2Ric(h)-4h.
\end{equation}
However, this evolution equation is only weakly parabolic. To achieve strict parabolicity, we introduce the following DeTurck-modified version. 
The \emph{normalized Ricci-DeTurck flow} for \eqref{RF} is given by \begin{equation}\label{DRF}
    \frac{\partial h}{\partial t}=-2Ric(h)-4h+\nabla_iV_j+\nabla_jV_i,
\end{equation}
where \begin{equation}\label{eq_V}
    V_j=h_{jk}h^{pq}\left(\Gamma_{pq}^k-(\Gamma_{h_0})_{pq}^k\right).
\end{equation}
Moreover, there is a family of diffeomeorphisms $\Phi(t):M\to M$ which solves 
$$
\begin{cases}
    \dfrac{\partial}{\partial t}\Phi(t)=-V(\Phi(t),t),\\
    \Phi(0)=Id,
\end{cases}
$$
where the components of $V$ is defined by $V^j=h^{jk}V_k$. If $h(t)$ solves \eqref{DRF}, then $\Phi(t)^*h(t)$ is a solution to \eqref{RF}.

\subsection{Ricci flow with bubbling-off}\label{subsection_bubbling}

In this section, we review the notion of Ricci flow with bubbling-off. For more details, readers are encouraged to consult the book by Bessi{\`e}res, Besson, Boileau, Maillot, and Porti \cite{BBB+10}. 

The construction of Ricci flow with this specific version of surgery on the cusped manifold $M$ was established by Bessi{\`e}res, Besson, and Maillot in \cite{Bessieres-Besson-Maillot}, under the assumption that the initial metric $h$ admits a cusp-like structure (Definition~\ref{def_cusplike}). This means that the restriction of $h$ on each cusp $T_j\times [0,\infty)$ is asymptotic to a hyperbolic metric $h_{cusp}=e^{-2s}h_{T_j}+ds^2$ in the cuspidal end.
Note that the hyperbolic metric $h_{cusp}$ is not unique, it varies based on different choices of flat metrics $h_{T_j}$ on $T_j$. The cusp-like structure ensures that the universal cover $(B^3,h)$ has bounded geometry, allowing the existence theorem of Ricci flow with surgery (Theorem 2.17, \cite{Bessieres-Besson-Maillot}) to apply, and thus making it possible to consider an equivalent version that passes to the quotient (Addendum 2.19, \cite{Bessieres-Besson-Maillot}).

Furthermore, their work examines the long-time behavior of the Ricci flow on $M$ starting from a metric $h(0)$ with a cusp-like structure. After a finite number of surgeries, as $t$ goes to infinity, the solution $h(t)$ converges smoothly to the hyperbolic metric $h_0$ on balls of radius $R$ for all $R>0$ (Theorem 1.2 of \cite{Bessieres-Besson-Maillot}). However, as indicated in the stability theorem (see Theorem~\ref{thm_stability_cusp} below), outside these balls, the cusp-like structure of $h(0)$ is preserved for all time. Therefore, if $h(0)$ is asymptotic to some $h_{cusp}$ different from the restriction of $h_0$ on the cusp, then the convergence cannot be global on $M$. 

It is worth noting that the proof of the stability theorem relies on a different construction of surgery. Since $M$ is both irreducible and lacks finite quotients of $S^3$ or $S^2\times S^1$, any surgery in $M$ splits off a 3-sphere and does not change the topology, the authors focused only on metric surgeries that change the metric on some 3-balls. This version of surgery is called \emph{Ricci flow with bubbling-off} (Definition 5.2.8, \cite{BBB+10}). The main distinction from the usual Hamilton-Perelman surgery is that, the bubbling-off occurs before a singularity appears. Moreover, in addition to the surgery parameters $r$ and $\delta$, they introduced new \emph{associated cutoff parameters} $H$ and $\Theta$ to determine when the scalar curvature at one end of a neck is large enough to perform a bubbling-off. In particular, this construction of bubbling-off is essential in proving the stability of cusp-like structures at infinity.

\begin{Def}[Cusp-like metrics]\label{def_cusplike}
    A metric $h$ on $M$ admits a \emph{cusp-like structure} if it is asymptotically cusped of order $k$ for any integer $k$. In other words, there exists a hyperbolic metric $h_{cusp}$ on the cusp and $\lambda>0$, such that $\lambda h-h_{cusp}$ approaches zero at infinity in the $C^k$-norm for each integer $k$.
\end{Def}

\begin{Th}[Stability of cusp-like structures (Theorem 2.22, \cite{Bessieres-Besson-Maillot})]\label{thm_stability_cusp}
Let $h(0)$ be a cusp-like metric on $M$. Then there exists a normalized Ricci flow with bubbling-off $h(t)$ on $M$, defined for all $t\in [0,\infty)$, starting at $h(0)$.

    Moreover, there is a factor $\lambda(t)>0$, such that $\lambda(t)h(t)-h_{cusp}$ goes to zero at infinity in the cuspidal end, in $C^k$-norm for each integer $k$, uniformly for $t\in [0,\infty)$. This means that $h(t)$ remains asymptotic to the same hyperbolic metric on the cusp for all time. 
\end{Th}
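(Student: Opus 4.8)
The plan is to prove the two assertions in sequence: first existence of a normalized Ricci flow with bubbling-off on $M$ for all $t\ge 0$, then the persistence of the cusp asymptotics. For existence, note that since $h(0)$ admits a cusp-like structure, its pullback to the universal cover $(B^3,\tilde h(0))$ has bounded geometry: bounded curvature and injectivity radius bounded below, because in the cuspidal ends $\tilde h(0)$ is $C^k$-asymptotic to the lift of a hyperbolic cusp metric $h_{cusp}=e^{-2s}h_{T_j}+ds^2$, which is just a chunk of hyperbolic space. One then invokes the existence theorem for normalized Ricci flow with bubbling-off on simply connected $3$-manifolds of bounded geometry (Theorem 2.17 of \cite{Bessieres-Besson-Maillot}) and passes to the quotient by the isometric, flow-equivariant $\pi_1(M)$-action (Addendum 2.19). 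The crucial point is the choice of surgery and cutoff parameters $(r,\delta,H,\Theta)$ from \cite{BBB+10}: the bubbling-off is triggered \emph{before} a singularity forms, whenever the scalar curvature at one end of a sufficiently long thin neck exceeds the threshold $\Theta$, and this early modification keeps the evolving metric uniformly controlled. Since $M$ is irreducible and admits no nontrivial finite quotient of $S^3$ or $S^2\times S^1$, every bubbling-off splits off a $3$-ball and does not change the diffeomorphism type of $M$, so $h(t)$ is defined on $M$ for all $t\ge 0$.

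For the persistence of the cusp asymptotics I would argue in two stages. \textbf{Stage 1 (confine all surgeries to a fixed compact set).} The relevant flow has the hyperbolic cusp metric $h_{cusp}$ as a stationary solution up to the built-in rescaling, since $Ric(h_{cusp})=-2h_{cusp}$ makes the right-hand side of \eqref{RF} vanish. Because $h(0)$ is $C^2$-close to $h_{cusp}$ far out in the cusp, one runs a maximum-principle/barrier argument for the scalar curvature, and more generally for the quantities detecting canonical neighborhoods, exploiting the exponential decay of the cross-sectional geometry in the $s$-direction: in a region $\{s\ge s_0\}$ the scalar curvature stays within a small neighborhood of $-6$ for all time, hence never reaches $\Theta$ and never lies in a canonical neighborhood. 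Consequently all bubbling-offs occur in the compact core, and by the standard volume-drop estimate at each surgery together with the decay of $R$ toward $-6$ one gets that only finitely many surgeries occur in total.

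\textbf{Stage 2 (stability of the smooth flow on the cusp).} On $\{s\ge s_0\}\times[0,\infty)$ the flow is now a genuine smooth solution of the normalized Ricci flow \eqref{RF} with no surgeries; fixing the DeTurck gauge relative to $h_{cusp}$ turns it into a strictly parabolic system of the form \eqref{DRF}. Writing $h(t)=\lambda(t)^{-1}(h_{cusp}+u(t))$, the tensor $u$ satisfies a quasilinear parabolic equation whose linearization at $u=0$ is, in the cuspidal ends, dominated by the rough Laplacian of $h_{cusp}$ plus a zeroth-order term; the torus cross-sections provide a spectral gap, so that modes which do not decay in $s$ are exponentially damped in $t$, while the remaining modes already decay in $s$. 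Combining this linear picture with the smallness of $u(0)$ far out, a continuity/bootstrap argument shows that $\|u(t)\|_{C^0(\{s\ge s_0\})}$ stays small and tends to $0$ as $s\to\infty$ uniformly in $t$; interior parabolic (Shi-type) estimates then upgrade this to the same statement in $C^k$ for every $k$. The scalar factor $\lambda(t)$ is chosen to renormalize the linear-in-$t$ expansion inherent to the Ricci flow and to absorb the slow scale drift caused by the (finitely many) surgeries; one shows it converges, yielding exactly that $\lambda(t)h(t)-h_{cusp}\to 0$ at infinity in $C^k$, uniformly in $t$.

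The main obstacle is the interaction between surgery and non-compactness in Stage 1: a priori one cannot exclude a bubbling-off occurring arbitrarily deep in the cusp, and the usual parabolic maximum principle does not apply directly across a surgery time or on a non-compact manifold without control of the behavior at infinity. The resolution is precisely the bubbling-off-before-singularity mechanism governed by the cutoff parameters $H,\Theta$: it prevents the curvature of the evolving metric from concentrating in the thin cuspidal region — where long thin necks abound but their scalar curvature remains near $-6$ — so the maximum principle can be localized to the compact core, and the surgeries, being finite in number and confined to that core, perturb the cusp asymptotics only by a controlled, decaying amount.
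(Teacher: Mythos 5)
The paper does not prove this statement at all: it is quoted verbatim as Theorem 2.22 of \cite{Bessieres-Besson-Maillot}, and the surrounding text of Section~\ref{subsection_bubbling} only summarizes the ingredients (bounded geometry of the universal cover, Theorem 2.17 and Addendum 2.19 of \cite{Bessieres-Besson-Maillot}, and the fact that irreducibility forces all surgeries to be metric modifications on $3$-balls). Your existence paragraph reproduces that summary faithfully, so there is nothing to object to there. The issue is with your proof of the stability assertion, which is your own construction and has genuine gaps rather than being a compressed version of the BBM argument.

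Concretely: in Stage 1 you exclude surgeries deep in the cusp by asserting that a maximum-principle/barrier argument keeps $R$ near $-6$ in $\{s\ge s_0\}$ for all time, but such an argument needs control of the evolving metric at infinity for all $t$ --- which is exactly the conclusion of the theorem --- so as written the step is circular; in \cite{Bessieres-Besson-Maillot} this is resolved by a bootstrap built into the bubbling-off construction (the cutoff parameters $H,\Theta$ and the ``surgery before singularity'' mechanism), not by a barrier argument taken for granted. You also invoke finiteness of the total number of surgeries, which is part of the long-time convergence statement (Theorem 1.2 of \cite{Bessieres-Besson-Maillot}) and is neither available at this stage nor needed for Theorem~\ref{thm_stability_cusp}, whose conclusion is uniform in $t$ regardless of how many surgeries occur. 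In Stage 2 the linearization/spectral-gap argument is not substantiated: the cusp cross-sections are flat tori whose Laplacian has the constants in its kernel, so ``modes which do not decay in $s$ are exponentially damped in $t$'' fails precisely for the zero modes (these are the nontrivial cusp deformations responsible for the factor $\lambda(t)$ and for the failure of global convergence discussed in the introduction); moreover the region $\{s\ge s_0\}$ is noncompact with collapsing injectivity radius and has a boundary at $s=s_0$, so the semigroup/bootstrap scheme requires boundary conditions and weighted norms that you do not set up, and the existence and convergence of $\lambda(t)$ is asserted rather than proved. Since the paper itself treats this theorem as a black-box citation, the appropriate move is to cite \cite{Bessieres-Besson-Maillot}; if you do want to reprove it, the two steps above are where the real work lies.
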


In \cite{Jiang-VargasPallete_entropy}, we generalized the theorem to asymptotically cusped metrics of any order $k\geq 2$. 

\begin{Th}[Stability of asymptotically cusped metrics]\label{thm_stability_cusp_asymp}
Let $h(0)$ be an asymptotically cusped metric on $M$ of order $k\geq 2$. 
Then there exists a normalized Ricci flow with bubbling-off $h(t)$ on $M$, defined for all $t\in [0,\infty)$, starting at $h(0)$.

    Moreover, assume that $\Vert Rm(h(0))\Vert_{C^{k-1}(M)}<\infty$. Then there is a factor $\lambda(t)>0$, such that $\lambda(t)h(t)-h_{cusp}$ goes to zero at infinity in the cuspidal end in $C^k$ uniformly for $t\in [0,\infty)$. 
\end{Th}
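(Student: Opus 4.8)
The plan is to follow the proof of Theorem~\ref{thm_stability_cusp} (Theorem~2.22 of \cite{Bessieres-Besson-Maillot}), upgrading each step so that it uses only the finite regularity available here. \emph{Step 1 (long-time existence).} The hypothesis $k\geq 2$ already yields bounded curvature: on each end the metric $\lambda h(0)$ is $C^2$-asymptotic to the hyperbolic cusp metric $h_{cusp}$, which has curvature identically $-1$, and $h(0)$ is smooth on the compact core, so $\sup_M|Rm(h(0))|<\infty$. The injectivity radius collapses in the cusp, but the ends lift to simply connected horoball-type regions in the universal cover $B^3$, so $(B^3,h(0))$ has bounded geometry; the existence theorem for Ricci flow with bubbling-off in bounded geometry (Theorem~2.17 and Addendum~2.19 of \cite{Bessieres-Besson-Maillot}) then produces a normalized Ricci flow with bubbling-off $h(t)$ on $M$, defined for all $t\in[0,\infty)$, with initial condition $h(0)$. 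This gives the first assertion.

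\emph{Step 2 (surgeries stay compact; comparison near infinity).} A bubbling-off is triggered only where the scalar curvature reaches the cutoff scale fixed by the associated parameters $H$ and $\Theta$, so it suffices to produce a time-independent curvature bound on a fixed neighborhood of infinity $\mathcal{C}'=\cup_i T_i\times[s_0,\infty)$: on such a region no surgery occurs and $h(t)$ is a smooth solution of the normalized Ricci flow for all $t$. On $\mathcal{C}'$ I would pass to the Ricci-DeTurck gauge relative to $h_{cusp}$, as in \cite{Jiang-VargasPallete_RF}, so that $h_{cusp}$ becomes a genuine fixed point and the difference $w(t)=\lambda(t)h(t)-h_{cusp}$ --- with $\lambda(t)>0$ the bounded scaling factor recording the normalization induced by the surgeries on the compact part --- solves a quasilinear parabolic system whose linear part is the linearized Ricci-DeTurck operator of $h_{cusp}$ and whose remainder is at least quadratic in $(w,\nabla w,\nabla^2 w)$.

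\emph{Step 3 (decay at infinity and $C^k$ upgrade).} The linearized operator on the horocusp has a spectral gap once the finite-dimensional space of hyperbolic cusp deformations is excluded --- this is where Bamler's stability result, recalled in the introduction, enters --- so I can choose an exponentially decaying barrier that dominates $|w|$ at $t=0$ and at $s=s_0$ and is a supersolution of the differential inequality satisfied by $|w|$ on $\mathcal{C}'$, provided the quadratic remainder is a priori small. Controlling that remainder uniformly in $t$ is exactly where the hypothesis $\Vert Rm(h(0))\Vert_{C^{k-1}(M)}<\infty$ is used: through Shi-type interior derivative estimates in the bounded-geometry background it gives uniform-in-time bounds $\Vert h(t)\Vert_{C^k(\mathcal{C}')}\leq C$, the finite-order substitute for the all-orders bounded geometry available in the cusp-like case. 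A maximum principle then yields $|w(t)|\to 0$ at infinity uniformly in $t$, and the $C^0$ decay together with the uniform $C^k$ bounds and parabolic Schauder estimates on unit balls near infinity upgrades it to $C^k$ convergence of $\lambda(t)h(t)-h_{cusp}$ to zero, uniformly on $[0,\infty)$.

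The main obstacle is the circularity between Steps 2 and 3: the maximum-principle estimate near infinity is only legitimate once one knows that no surgery occurs there, while it is precisely that estimate which keeps the curvature near infinity below the surgery scale. As in \cite{Bessieres-Besson-Maillot}, this is resolved by a continuity argument in $t$ --- one runs the comparison on the maximal time interval on which the curvature on $\mathcal{C}'$ stays controlled and shows this interval is both open and closed --- and the only genuinely new input needed to carry it out for a merely $C^k$-asymptotic (rather than smoothly asymptotic) initial metric is the finite-order Shi estimates of Step~3.
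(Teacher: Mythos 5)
A preliminary remark: this paper does not prove Theorem~\ref{thm_stability_cusp_asymp} at all --- it is quoted from \cite{Jiang-VargasPallete_entropy} --- so there is no internal proof to measure your argument against, and your proposal has to be judged on its own terms. In outline it follows the route one would expect: long-time existence from the bounded-geometry existence theorem of \cite{Bessieres-Besson-Maillot} (Step 1 is fine, and matches the discussion in Section~\ref{subsection_bubbling}), exclusion of surgeries from a fixed neighborhood of infinity, a weighted comparison for $w=\lambda(t)h(t)-h_{cusp}$ there, and the hypothesis $\Vert Rm(h(0))\Vert_{C^{k-1}(M)}<\infty$ entering through Shi-type derivative estimates as the finite-order substitute for the all-orders control available for cusp-like data; the open--closed continuity device for the circularity you flag is also the standard one.

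The genuine gap is in Step 3, which is where the theorem actually lives. You produce the exponentially decaying barrier from ``a spectral gap of the linearized operator on the horocusp once the cusp deformations are excluded'', attributed to Bamler's stability theorem. Bamler's result requires the initial metric to be globally $C^0$-close to the complete finite-volume hyperbolic metric $h_0$; on $\mathcal{C}'=\cup_i T_i\times[s_0,\infty)$ your metric is close to $h_{cusp}$ only asymptotically, is of order one different from it at $s=s_0$, and $h_{cusp}$ need not be the cusp of $h_0$ at all, so that theorem gives you nothing here. More importantly, ``excluding'' the neutral directions is precisely the point that cannot be waved away: the uniform-in-time content of the statement is that the flow does not drift inside the family of cusp metrics (change of the flat structure on $T_i$, or of the scale beyond the single factor $\lambda(t)$), and a scalar barrier estimate for $|w|$ does not by itself explain why such drift, sourced through the boundary at $s=s_0$ over an infinite time interval, cannot accumulate; one needs a quantitative statement that the boundary influence decays in $s$ uniformly in $t$, or an explicit control of the projection onto the cusp-deformation modes, which is the technical heart of the proof of Theorem~\ref{thm_stability_cusp} in \cite{Bessieres-Besson-Maillot} that you are implicitly importing without adapting. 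A smaller but telling inaccuracy: $\lambda(t)$ cannot be ``the normalization induced by the surgeries on the compact part'' --- bubbling-off modifies the metric only on $3$-balls and never touches the cusp; the factor comes from the evolution of the overall scale of the nearly hyperbolic cusp under the chosen normalization of the flow, starting from the initial constant $\lambda$ in Definition~\ref{def_asymptotically_cusp}. As it stands, then, the proposal is a reasonable skeleton, but the decisive step --- uniform-in-time persistence of the same $h_{cusp}$, including the construction and control of $\lambda(t)$ --- is asserted rather than proved, and the one external input invoked for it does not apply.
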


\subsection{Stability for Ricci-DeTurck flow}
In this section, we introduce some stability results associated with the normalized Ricci-DeTurck flow \eqref{DRF}. 

\begin{Lemma}\label{lemma_derivative_estimates}
    Let $(M,h_0)$ be a complete $3$-manifold, and let $\epsilon>0$ be a sufficiently small constant. Given any $k\in\mathbb{N}$ and $C_0>0$, there exists a constant $C=(\epsilon,k,C_0)>0$ such that the following statement holds.  Consider a normalized Ricci-DeTurck flow $g(t)$ defined on $M\times [0,T]$, where $T=T(\epsilon)$ is given by the short-time existence, such that 
    \begin{equation*}
        \Vert g(0)-h_0\Vert_{C^0(M)}<\epsilon,
    \end{equation*}
   and 
    \begin{equation*}
        \Vert g(0)-h_0\Vert_{C^k(M)}\leq C_0.
    \end{equation*} 
    Then \begin{enumerate}
        \item $$\Vert g(t)-h_0\Vert_{C^k(M)}\leq C\quad \forall t\in [0,T],$$
        \item $$\Vert \nabla^{k+1}_{h_0}g(t)\Vert_{C^0(M)}\leq Ct^{-\frac12}\quad \forall t\in [0,T].$$
    \end{enumerate}
\end{Lemma}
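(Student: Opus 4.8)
The plan is to prove this as a standard parabolic smoothing estimate for the Ricci-DeTurck equation, viewed as a strictly parabolic quasilinear system for the difference $u(t) := g(t) - h_0$, with all estimates made uniform on $M$ by exploiting the bounded geometry of the complete manifold $(M,h_0)$ together with the $C^0$-smallness hypothesis. First I would rewrite the normalized Ricci-DeTurck flow \eqref{DRF} in $h_0$-coordinates as an equation of the form $\partial_t u = a^{kl}(u)\,\nabla_k\nabla_l u + b(u, \nabla u) + c(u)$, where $\nabla$ is the $h_0$-Levi-Civita connection, the coefficients $a^{kl}(u)$ depend smoothly on $u$ with $a^{kl}(0) = h_0^{kl}$ (so the operator is uniformly elliptic as long as $\|u\|_{C^0} < \epsilon$ with $\epsilon$ small), $b$ is quadratic in $\nabla u$ with coefficients depending on $u$, and $c(u)$ collects the zeroth-order terms coming from the curvature of $h_0$ and the normalization. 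Since $(M,h_0)$ is a complete manifold with bounded curvature (here hyperbolic, hence all covariant derivatives of $Rm(h_0)$ are bounded), a harmonic-radius / uniformly-locally-Euclidean covering argument lets us pass between weighted $C^k$ norms on $M$ and on a fixed-size coordinate ball, so the whole argument can be localized.

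The core of the argument is then a Bernstein-type / Bando-Shi interior derivative estimate, carried out in two parts. For part (1), I would argue by a bootstrap on $k$: assuming the statement through order $k-1$ (with $C^0$-smallness propagated on $[0,T]$ by a maximum-principle argument, as already controlled by the short-time existence theory with $T = T(\epsilon)$), set up the evolution equation for $|\nabla^k_{h_0} u|^2$. Commuting $\nabla^k$ past the parabolic operator produces a good negative term $-2a^{kl}\nabla_k\nabla_l$ acting on $|\nabla^k u|^2$ at leading order, plus lower-order terms that are controlled using the already-established bounds on $\nabla^{\le k-1} u$ and the bounded geometry of $h_0$; the only dangerous term is the one involving $\nabla^{k+1} u$, which is absorbed into the good gradient term after a Cauchy-Schwarz with appropriate constant (this is the standard trick in Bando-Shi estimates). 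Applying the parabolic maximum principle on $M$ — valid because $M$ is complete with bounded geometry and the quantities are bounded by the local existence theory — gives the uniform bound $\|\nabla^k_{h_0} g(t)\|_{C^0(M)} \le C$ for $t \in [0,T]$, hence (1). For part (2), the smoothing rate $t^{-1/2}$ for the top derivative $\nabla^{k+1} u$ is obtained by the localized interior estimate: one runs the maximum principle on the weighted quantity $t\,|\nabla^{k+1}u|^2 + A\,|\nabla^k u|^2$ (with $A$ a large constant depending on $k$, $\epsilon$), whose evolution inequality has a $|\nabla^k u|^2$-type source term on the right that is absorbed by the $A|\nabla^k u|^2$ piece, yielding $t|\nabla^{k+1}u|^2 \le C$ and hence $\|\nabla^{k+1}_{h_0} g(t)\|_{C^0(M)} \le C t^{-1/2}$. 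The constants depend only on $\epsilon$, $k$, $C_0$, and the geometry of $(M,h_0)$, which is fixed, as required.

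The main obstacle, and the point that needs the most care, is making the interior estimates \emph{global and uniform} on the noncompact manifold $M$: one must ensure the quantities to which the maximum principle is applied are a priori bounded (so that Omori-Yau-type or Ecker-Huisken-type versions of the maximum principle on complete bounded-geometry manifolds apply), and one must track that all "universal" constants arising from commutators $[\nabla^k, \nabla_i\nabla_j]$ and from the coordinate charts genuinely depend only on a uniform bound for the geometry of $h_0$ and not on any local data that degenerate in the cusp. Because $h_0$ is hyperbolic, $(M,h_0)$ has bounded geometry in the strong sense (injectivity radius bounded below after passing to the universal cover, all iterated curvature derivatives bounded), so this is available, but it is the step where the "complete $3$-manifold" hypothesis must be invoked honestly; I would likely state the needed bounded-geometry input as a preliminary remark and cite the standard local Ricci-DeTurck smoothing estimates (e.g. Shi-type estimates adapted to the DeTurck gauge, cf. Bamler's work) for the per-chart version, then globalize.
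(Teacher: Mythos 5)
Your proposal is correct in outline and follows the same overall Bando--Shi/Bernstein strategy as the paper: induction on $k$, evolution inequalities for $\nabla^k(g-h_0)$, absorption of the top-order term by Cauchy--Schwarz, and a $t$-weighted quantity for the smoothing rate in (2). The implementation differs in two ways worth noting. First, the paper (following Simon) works with the multiplicative quantities $\psi=\bigl(a+|\nabla^{k-1}g|^2\bigr)|\nabla^k g|^2$ and $w=t\bigl(C^2+|\nabla^{k}g|^2\bigr)|\nabla^{k+1}g|^2$ rather than your plain $|\nabla^k u|^2$ and additive $t|\nabla^{k+1}u|^2+A|\nabla^k u|^2$; the extra factor produces a negative term quadratic in $\psi$ (resp.\ in $w$), so the bound drops out of a purely algebraic inequality at a supremum point instead of a Gr\"onwall argument. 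Second, and this is precisely where the obstacle you flag is resolved differently: the paper never invokes an Omori--Yau or Ecker--Huisken type maximum principle on the noncompact manifold. It multiplies by a time-independent cutoff $\eta$ supported in a ball of fixed radius, satisfying $|\nabla\eta|^2\leq c_5\eta$ and $\nabla_i\nabla_j\eta\geq -c_5$, and evaluates the evolution inequality at an interior supremum point of $\psi\eta$ (which exists since the support is compact); covering $M$ by such balls with uniform constants, available from the bounded geometry of $h_0$, then gives the global bound. Your global-maximum-principle route also works, but it requires you to supply the a priori qualitative boundedness of the derivative quantities on $M\times[0,T]$ from the short-time existence theory (as you acknowledge), whereas the cutoff argument sidesteps this entirely; your Gr\"onwall-based version of (1) yields constants depending on $T=T(\epsilon)$, which is consistent with the stated dependence $C=C(\epsilon,k,C_0)$.
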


\begin{proof}
When $k=1$, the result was established by Simon in \cite[Lemma 2.1]{Simon2005DeformingLipschitz}. We will prove the general case by induction on $k$ following the approach in \cite[Lemma 4.2]{Simon_DeformationO}. In the following proof, $\nabla_{h_0}$ and $|\cdot|_{h_0}$ are always with respect to $h_0$, and we will simplify the notations as $\nabla$ and $|\cdot|$.

We start by proving (1).
Assume that we know already \begin{equation*}
    \Vert g(t)-h_0\Vert_{C^{k-1}(M)}\leq c_0\quad \forall t\in [0,T].
\end{equation*}
Moreover, for some small $\epsilon_0>0$ to be chosen later make $\epsilon$ smaller if needed so that
\begin{equation*}
    \Vert g(t)-h_0\Vert_{C^{0}(M)}\leq \epsilon_0\quad \forall t\in [0,T].
\end{equation*}
According to \cite[Lemma 4.2]{Simon_DeformationO}, we have \begin{align}\label{equ_derivative_lemma_4}
    \frac{\partial}{\partial t}\nabla^kg=& g^{ij}\nabla_i\nabla_j(\nabla^kg)+\sum_{i+j+m=k\,i,j,m\leq k}\nabla^ig^{-1}\ast \nabla^jg^{-1}\ast \nabla^mRm(h_0)\\\nonumber
    &+\sum_{i+j+m+l=k+2,\,i,j,m,l\leq k+1}\nabla^ig^{-1}\ast \nabla^j g^{-1}\ast \nabla^mg\ast \nabla^lg,
\end{align}
and then
\begin{align*}
    \frac{\partial}{\partial t}|\nabla^kg|^2=& g^{ij}\nabla_i\nabla_j|\nabla^kg|^2-2g^{ij}\nabla_i(\nabla^kg)\nabla_j(\nabla^kg)\\
    &+2\sum_{i+j+m=k\,i,j,m\leq k}\nabla^ig^{-1}\ast \nabla^jg^{-1}\ast \nabla^mRm(h_0)\ast\nabla^kg\\
    &+2\sum_{i+j+m+l=k+2,\,i,j,m,l\leq k+1}\nabla^ig^{-1}\ast \nabla^j g^{-1}\ast \nabla^mg\ast \nabla^lg\ast \nabla^kg,
\end{align*}
where $\ast$ represents the tensor product with respect to $h_0$.
By assumption, all lower-order derivatives $\nabla^i g$ with $i\leq k-1$ are bounded by a constant. This implies that 
\begin{align*}
    \frac{\partial}{\partial t}|\nabla^kg|^2\leq & g^{ij}\nabla_i\nabla_j|\nabla^kg|^2-2g^{ij}\nabla_i(\nabla^kg)\nabla_j(\nabla^kg)\\
    &+c_1|\nabla^kg|+c_1|\nabla^kg|^2+c_1|\nabla^{k+1}g||\nabla^kg|\\
    \leq &  g^{ij}\nabla_i\nabla_j|\nabla^kg|^2-2g^{ij}\nabla_i(\nabla^kg)\nabla_j(\nabla^kg)\\
    &+c_2|\nabla^k g|^2+(1-\epsilon_0)|\nabla^{k+1}g|^2+c_2,
\end{align*}
where the last inequality follows from Cauchy-Schwarz inequality, $c_1, c_2$ depend on $h_0,k,\epsilon_0,c_0$. We will omit the dependence on $h_0$ from now on.
Since \begin{equation*}
    2g^{ij}\nabla_i(\nabla^kg)\nabla_j(\nabla^kg)\geq 2(1-\epsilon_0)|\nabla^{k+1}g|^2,
\end{equation*}
substituting it into the previous inequality yields 
\begin{equation}\label{equ_derivative_lemma_1}
    \frac{\partial}{\partial t}|\nabla^kg|^2\leq g^{ij}\nabla_i\nabla_j|\nabla^kg|^2-(1-\epsilon_0)|\nabla^{k+1}g|^2+c_2|\nabla^k g|^2+c_2.
\end{equation}
Similarly, \begin{align}\label{equ_derivative_lemma_2}
    \frac{\partial}{\partial t}|\nabla^{k-1}g|^2\leq & g^{ij}\nabla_i\nabla_j|\nabla^{k-1}g|^2-(1-\epsilon_0)|\nabla^{k}g|^2+c_2|\nabla^{k-1} g|^2+c_2\\\nonumber
    \leq & g^{ij}\nabla_i\nabla_j|\nabla^{k-1}g|^2-(1-\epsilon_0)|\nabla^{k}g|^2+c_3,
\end{align}
where $c_3=c_2c_0^2+c_2$.

Furthermore, we define \begin{equation*}
    \psi(x,t)=\left(a+|\nabla^{k-1}g|^2\right)|\nabla^k g|^2,
\end{equation*}
where $a>0$ is a constant that will be chosen later. Using \eqref{equ_derivative_lemma_1} and \eqref{equ_derivative_lemma_2}, we obtain \begin{align*}
    \frac{\partial}{\partial t}\psi\leq & g^{ij}\nabla_i\nabla_j\psi -(1-\epsilon_0)|\nabla^{k}g|^4+c_3|\nabla^kg|^2\\
    &+ \left(a+|\nabla^{k-1}g|^2\right)\left(-(1-\epsilon_0)|\nabla^{k+1}g|^2+c_2|\nabla^k g|^2+c_2\right)\\
    &-2g^{ij}\nabla_i|\nabla^{k-1}g|^2\nabla_j|\nabla^kg|^2\\
    \leq & g^{ij}\nabla_i\nabla_j\psi -\frac{1}{2}(1-\epsilon_0)|\nabla^{k}g|^4-a(1-\epsilon_0)|\nabla^{k+1}g|^2+c_4\\
    &-2g^{ij}\nabla_i|\nabla^{k-1}g|^2\nabla_j|\nabla^kg|^2,
\end{align*}
where the last inequality follows from Cauchy-Schwarz inequality, and $c_4$ depends on $\epsilon_0,c_2,c_3$. The last term satisfies \begin{align*}
    -2g^{ij}\nabla_i|\nabla^{k-1}g|^2\nabla_j|\nabla^kg|^2\leq & 8(1+\epsilon_0)|\nabla^kg|^2|\nabla^{k-1}g||\nabla^{k+1}g|\\
    \leq & \frac{1}{4}(1-\epsilon_0)|\nabla^kg|^4+16\frac{(1+\epsilon_0)^2}{1-\epsilon_0}c_0^2|\nabla^{k+1}g|^2.
\end{align*}
Hence, \begin{equation*}
    \frac{\partial}{\partial t}\psi(x,t)\leq g^{ij}\nabla_i\nabla_j\psi+\left(16\frac{(1+\epsilon_0)^2}{1-\epsilon_0}c_0^2-a(1-\epsilon_0)\right)|\nabla^{k+1}g|^2-\frac{1}{4}(1-\epsilon_0)|\nabla^{k}g|^4+c_4.
\end{equation*}
Choose $a$ so that $16\frac{(1+\epsilon_0)^2}{1-\epsilon_0}c_0^2-a(1-\epsilon_0)\leq 0$. We obtain $\psi^2\leq (a+c_0^2)^2|\nabla^kg|^4\leq 2a^2|\nabla^kg|^4$. We chose $\epsilon_0$ so that $\epsilon_0\leq \frac12$. Hence it follows that
\begin{equation*}
    \frac{\partial}{\partial t}\psi(x,t)\leq g^{ij}\nabla_i\nabla_j\psi-\frac{1}{4}(1-\epsilon_0)|\nabla^{k}g|^4+c_4\leq g^{ij}\nabla_i\nabla_j\psi-\frac{1}{16a^2}\psi^2+c_4.
\end{equation*}

Next, we cover $M$ by balls with a fixed radius $r>0$, and consider the time independent cut-off function $\eta$ defined in \cite[Lemma 4.1]{Simon_DeformationO} with the following properties: 
\begin{subequations}
\begin{align}
    &\eta(x)=1\quad \forall x\in B_{h_0}(x_0,r),\label{equ_cutoff_1}\\
    &\eta(x)=0\quad \forall x\in M\setminus B_{h_0}(x_0,2r),\label{equ_cutoff_2}\\
    &\eta(x)\in [0,1]\quad \forall x\in M,\label{equ_cutoff_3}\\
    &|\nabla \eta|^2\leq c_5\eta,\label{equ_cutoff_4}\\
    &\nabla_i\nabla_j\eta\geq -c_5,\label{equ_cutoff_5}
\end{align}
\end{subequations}
where the constant $c_5$ depends on $r$. Since $r$ is a fixed number, for example, we may assume $r=1$ and omit the dependence on $r$ from now on.
By \eqref{equ_cutoff_3} and \eqref{equ_cutoff_5}, \begin{align}\label{equ_derivative_lemma_3}
    \frac{\partial}{\partial t}(\psi\eta)\leq & g^{ij}\nabla_i\nabla_j(\psi\eta)-\frac{1}{16a^2}\psi^2\eta-2g^{ij}\nabla_i\psi\nabla_j\eta-\psi g^{ij}\nabla_i\nabla_j\eta+c_4\\\nonumber
    \leq & g^{ij}\nabla_i\nabla_j(\psi\eta)-\frac{1}{16a^2}\psi^2\eta-2g^{ij}\nabla_i\psi\nabla_j\eta+c_5\psi+c_4.
\end{align}
Assume that $(y_0,t_0)$ is an interior point of $B_{h_0}(x_0,2r)\times (0,T)$ where the supremum of $\psi\eta$ along $M\times \lbrace t_0\rbrace$ is attained. Then \begin{align*}
    -2g^{ij}\nabla_i\psi\nabla_j\eta(y_0,t_0) &=-2g^{ij}\frac{1}{\eta}\nabla_i(\psi\eta)\nabla_j\eta(y_0,t_0)+2g^{ij}\frac{\psi}{\eta}|\nabla\eta|^2(y_0,t_0)\\
    &=2g^{ij}\frac{\psi}{\eta}|\nabla\eta|^2(y_0,t_0)\leq 2(1+\epsilon_0)c_5\psi(y_0,t_0),
\end{align*}
where the last inequality applies \eqref{equ_cutoff_4}. When it is combined with \eqref{equ_derivative_lemma_3}, we get \begin{equation*}
    0\leq \frac{\partial}{\partial t}(\psi\eta)(y_0,t_0)\leq -\frac{1}{16a^2}\psi^2\eta(y_0,t_0)+4c_5\psi(y_0,t_0)+c_4.
\end{equation*}
Multiplying by $\eta$ we obtain 
$$\left(\frac{1}{16a^2}\left(\psi\eta\right)^2-4c_5\psi\eta-c_4\eta\right)(y_0,t_0)\leq 0.$$ Therefore by \eqref{equ_cutoff_1}  we conclude that \begin{equation*}
    \psi(x,t)\leq \psi\eta(y_0,t_0)\leq c_6, \quad \forall (x,t)\in B_{h_0}(x,r)\times (0,T),
\end{equation*}
where $c_6$ depends on $a,c_4,c_5$, and therefore on $k,\epsilon_0,c_0$. It shows that \begin{equation*}
    |\nabla^kg|\leq \left(\frac{c_6}{a}\right)^{\frac12}, \quad \forall (x,t)\in M\times (0,T).
\end{equation*}
The bound extends to $t=T$ by continuity, and since $|\nabla^kg(0)|\leq C_0$ by assumption, we obtain a uniform bound on $M\times [0,T]$, which derives (1) by induction.

We now prove (2). Consider the following function $w(x,t)$ defined as $$
w(t)=\begin{cases}
     t\left(C^2+|\nabla^{k}g|^2\right)|\nabla^{k+1} g|^2 & t\in(0,T],\\
   0 &t=0.
\end{cases}
$$
Analogously to the calculation of $\psi$, we can deduce that \begin{align*}
    \frac{\partial}{\partial t}w\leq& g^{ij}\nabla_i\nabla_jw-t\frac{1-\epsilon_0}{2}|\nabla^{k+1}g|^4-tC^2(1-\epsilon_0)|\nabla^{k+2}g|^2+\frac{w}{t}+c_7\\
    \leq &g^{ij}\nabla_i\nabla_jw-t\frac{1-\epsilon_0}{2}|\nabla^{k+1}g|^4+\frac{w}{t}+c_7\\
    =& g^{ij}\nabla_i\nabla_jw-\frac{1-\epsilon_0}{2t}\frac{w^2}{\left(C^2+|\nabla^{k}g|^2\right)^2}+\frac{w}{t}+c_7\\
    \leq & g^{ij}\nabla_i\nabla_jw-\frac{1}{16C^4t}w^2+\frac{w}{t}+c_7,
\end{align*}
where we use the estimate $|\nabla^kg|\leq C$ obtained from (1), and $c_7$ depends on $k,\epsilon_0,C$.

When multiplied by the cut-off function $\eta$, it follows that at the point $(y_0,x_0)$ in the interior of $B_{h_0}(x_0,2r)\times (0,T)$ where the supremum of $w\eta$ is attained, we have \begin{equation*}
   \left( \frac{1}{16C^4t}\left(w\eta\right)^2-\left(\frac{1}{t}+4c_5\right)w\eta-c_7\eta\right)(y_0,t_0)\leq 0.
\end{equation*}
As before, $w$ is then uniformly bounded above by a constant on $M\times (0,T)$, which extends to $M\times [0,T]$ by continuity.
This implies that $|\nabla^{k+1}g|\lesssim t^{-\frac12}$.
   
\end{proof}

Next, we introduce a local stability result for hyperbolic metrics using the above lemma. 

\begin{Lemma}[Local stability of hyperbolic metrics]\label{Lemma_local persist}
Let $(M,h_0)$ be a hyperbolic $3$-manifold of finite volume. Given any $k\in\mathbb{N}$, $D>0$, there exist $T_{loc}=T_{loc}(k,D)>0$ and $d_{loc}=d_{loc}(k, D)\leq D$ with the following property. Let $g(t)$ be a normalized Ricci-DeTurck flow defined on $M\times [0,T_{loc}]$ with initial metric $g(0)$. Suppose that  
    \begin{equation*}
        \Vert g(0)-h_0\Vert _{C^k(M)}\leq d_{loc}.
    \end{equation*}
    Then \begin{equation*}
        \Vert g(t)-h_0\Vert _{C^k(M)}< D\quad \forall t\in [0,T_{loc}].
    \end{equation*}
\end{Lemma}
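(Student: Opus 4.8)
The plan is to show that, on a short enough time interval, each of the quantities $\sup_M|\nabla^{j}(g(t)-h_0)|$ for $0\le j\le k$ can grow only by a controlled amount over its value at $t=0$, which is at most $d_{loc}$ (throughout, $\nabla$, $|\cdot|$ and the $C^j$-norms are taken with respect to $h_0$, as in the proof of Lemma~\ref{lemma_derivative_estimates}). The ingredients are the a priori bounds of Lemma~\ref{lemma_derivative_estimates} and the evolution equations \eqref{equ_derivative_lemma_4}; the new point over Lemma~\ref{lemma_derivative_estimates} is that, instead of applying a maximum principle directly to those equations, one should apply it to the \emph{time-integrated} differential inequality, which upgrades a uniform bound to a modulus of continuity in time. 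First I would fix $d_{loc}\le\min\{D,\epsilon\}$, where $\epsilon$ is the smallness constant of Lemma~\ref{lemma_derivative_estimates} (shrunk further if needed so that every metric within $\epsilon$ of $h_0$ in the $C^0$-norm is uniformly elliptic with respect to $h_0$), together with $T_{loc}\le T(\epsilon)$; both will be shrunk once more at the end. Applying Lemma~\ref{lemma_derivative_estimates} at each order $j\in\{1,\dots,k\}$ with $C_0=D$ — legitimate because $\|g(0)-h_0\|_{C^0}\le d_{loc}<\epsilon$ and $\|g(0)-h_0\|_{C^j}\le d_{loc}\le D$ — yields a constant $C_*=C_*(\epsilon,k,D)$ with $\|g(t)-h_0\|_{C^j(M)}\le C_*$ and $\|\nabla^{j+1}g(t)\|_{C^0(M)}\le C_*t^{-1/2}$ on $[0,T_{loc}]$. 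Every constant below depends only on $k$, $D$, $\epsilon$ and the fixed geometry of $(M,h_0)$; note in particular that $Rm(h_0)$ is parallel, so $|Rm(h_0)|$ is constant and $\nabla^mRm(h_0)=0$ for $m\ge1$, which annihilates all but one of the curvature terms in \eqref{equ_derivative_lemma_4}.

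Fix $j$ with $1\le j\le k$ and regard \eqref{equ_derivative_lemma_4} (together with the bounded lower-order term produced by the normalization $-4g$) as a linear heat-type equation $\partial_t u=g^{ab}\nabla_a\nabla_b u+E_j$ for $u:=\nabla^{j}g$. Plugging in the bounds just obtained — all $\nabla^i g$ and $\nabla^i g^{-1}$ bounded for $i\le j$, $g$ uniformly elliptic, and $|\nabla^{j+1}g|\le C_*t^{-1/2}$ — every $\ast$-monomial in $E_j$ is under control, and since at most one factor of any such monomial can carry $j{+}1$ derivatives, one obtains $|E_j(\cdot,t)|\le A\,(1+t^{-1/2})$ on $M\times(0,T_{loc}]$. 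Then $\partial_t|u|^2\le g^{ab}\nabla_a\nabla_b|u|^2+2|E_j|\,|u|$, from which $|u|$ is, in the viscosity sense, a subsolution of $\partial_t v=g^{ab}\nabla_a\nabla_b v+|E_j|$; since $|u|=|\nabla^{j}g|$ is bounded on $M\times[0,T_{loc}]$ by Lemma~\ref{lemma_derivative_estimates}(1), the maximum principle — localized with the $h_0$-cutoff functions of the proof of Lemma~\ref{lemma_derivative_estimates} to cope with the noncompactness of $M$, the degenerate case in which the localized spatial supremum is attained at $t=0$ simply returning the bound $d_{loc}$ — gives
\[
\sup_M|\nabla^{j}g(\cdot,t)|\ \le\ d_{loc}+\int_0^t\sup_M|E_j(\cdot,s)|\,ds\ \le\ d_{loc}+A\big(t+2\sqrt t\,\big),\qquad t\in[0,T_{loc}].
\]
For $j=0$ I would run the same argument on \eqref{DRF} itself, put in the expanded form $\partial_t(g-h_0)=g^{ab}\nabla_a\nabla_b(g-h_0)+\widetilde F$ — valid because $h_0$ is a stationary solution of \eqref{DRF} (there $V\equiv 0$ and $Ric(h_0)=-2h_0$, so $-2Ric(h_0)-4h_0=0$) — where $|\widetilde F|\le C(|g-h_0|+|\nabla g|^2)\le C(|g-h_0|+C_*^2)$ on $[0,T_{loc}]$; the maximum principle then gives $\varphi'\le C\varphi+CC_*^2$ for $\varphi(t):=\sup_M|g(\cdot,t)-h_0|$, hence $\varphi(t)\le 2d_{loc}+A'\,t$ once $T_{loc}$ is small.

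Combining the $k+1$ estimates gives $\|g(t)-h_0\|_{C^k(M)}\le B_0\,d_{loc}+B\,\omega(T_{loc})$ for all $t\in[0,T_{loc}]$, where $B_0,B$ depend only on $k,D,\epsilon,h_0$ and $\omega(T):=T+2\sqrt T\to 0$ as $T\to 0^{+}$. Choosing $d_{loc}$ small enough that $B_0 d_{loc}<D/2$ (and $d_{loc}\le\min\{D,\epsilon\}$), and then $T_{loc}$ small enough that $B\,\omega(T_{loc})<D/2$, yields $\|g(t)-h_0\|_{C^k(M)}<D$ on $[0,T_{loc}]$ with $d_{loc}\le D$, both constants depending only on $k$ and $D$. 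I expect the single delicate point to be the maximum-principle step, because $M$ is noncompact and the forcing $E_j$ carries a mild $t^{-1/2}$ singularity at $t=0$; but neither is serious — the noncompactness is handled exactly as in Lemma~\ref{lemma_derivative_estimates} (localize with $h_0$-cutoffs; if the localized supremum lies on $\{t=0\}$, read off the initial bound), and the singularity is integrable, contributing only the harmless $2\sqrt t$ term. Everything else is bookkeeping of $\ast$-monomials and powers of $C_*$.
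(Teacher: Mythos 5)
Your argument is essentially the paper's: invoke Lemma~\ref{lemma_derivative_estimates} to get the uniform $C^{k}$ bound and the $t^{-1/2}$ bound on the $(k{+}1)$-st derivative, feed these into the evolution equation \eqref{equ_derivative_lemma_4}, run a cutoff-localized maximum principle to bound the growth of $\sup_M|\nabla^{j}g|$ by $d_{loc}+A(t+2\sqrt{t})$, integrate, and then shrink $d_{loc}$ and $T_{loc}$ so the total stays below $D$. The one place you diverge is the order-zero estimate: the paper simply quotes Bamler's Proposition 2.8, which gives both short-time existence and $\Vert g(t)-h_0\Vert_{C^0}\leq C_{loc}\Vert g(0)-h_0\Vert_{C^0}$ on $[0,T_{loc}]$, whereas you re-derive a $C^0$ bound by writing \eqref{DRF} as $\partial_t l=g^{ab}\nabla_a\nabla_b l+\widetilde F$ with $|\widetilde F|\lesssim |l|+|\nabla l|^2$ and applying the maximum principle. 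That works, but note it quietly presupposes uniform control of $g^{-1}(t)$ (uniform ellipticity, i.e.\ $C^0$ closeness of $g(t)$ to $h_0$ along the whole interval), which is exactly what Bamler's proposition — or the short-time theory already built into Lemma~\ref{lemma_derivative_estimates} — supplies; to make your version self-contained you would need a small continuity/bootstrap argument closing that loop, or you can just cite Bamler as the paper does. The localization errors from the cutoff that you wave at are indeed harmless for the reason you give (they are bounded by $C(1+t^{-1/2})$ thanks to Lemma~\ref{lemma_derivative_estimates} and hence absorbed into the $A(t+2\sqrt{t})$ term), which matches the level of detail in the paper's own "apply the cut-off function and the maximum principle as in the previous lemma" step.
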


\begin{proof}
    By Proposition 2.8 of \cite{Bamler}, for sufficiently small $d_{loc}$, there exist constants $T_{loc}, C_{loc}>0$, such that if $\Vert g(0)-h_0\Vert_{C^0(M)}\leq d_{loc}$, then a smooth solution $g(t)$ to the normalized Ricci-DeTurck flow exists on $[0,T_{loc}]$, and $$\Vert g(t)-h_0\Vert_{C^0(M)}\leq C_{loc}\Vert g(0)-h_0\Vert_{C^0(M)}\leq C_{loc}d_{loc}\quad \forall t\in [0,T_{loc}].$$
    
    When $k\geq 1$, since $g(0)-h_0$ is $C^k$,  we may assume $d_{loc}\leq \epsilon$ in Lemma~\ref{lemma_derivative_estimates} and obtain that
 \begin{equation*}
        \Vert g(t)-h_0\Vert_{C^k(M)}\leq C,\quad \Vert \nabla^{k+1}_{h_0}g(t)\Vert_{C^0(M)}\leq Ct^{-\frac12}.
    \end{equation*}
    Using the formula of $\frac{\partial}{\partial t}\nabla_{h_0}^kg(t)$ in \eqref{equ_derivative_lemma_4}, and applying the cut-off function and the maximum principle as in the previous lemma, we have \begin{equation*}
        \frac{\partial}{\partial t}|\nabla_{h_0}^k g(t)|\lesssim t^{-\frac12}+1.
    \end{equation*}
    Therefore, by integrating over $[0,T_{loc}]$, 
    \begin{equation*}
        \Vert g(t)-h_0\Vert_{C^k(M)}\leq  \Vert g(0)-h_0\Vert_{C^k(M)}+C'T_{loc}^{\frac12}+C'T_{loc}\leq d_{loc}+C'T_{loc}^{\frac12}+C'T_{loc}.
    \end{equation*}
    After possibly replacing $d_{loc}$ and $T_{loc}$ by smaller constants, we derive the desired result. 

\end{proof}

Next, we establish the following global stability theorem under $C^k$ perturbations. The proof is derived from the local stability and the global stability under $C^0$ perturbations of $h_0$ by Bamler \cite{Bamler}.

\begin{Th}[Stability of hyperbolic metrics under $C^k$ perturbations]\label{thm_stability_C^k}
Let $(M,h_0)$ be a hyperbolic $3$-manifold of finite volume. There is a constant $d_0$, such that if a metric $g(0)$ satisfies $\Vert g(0)-h_0\Vert _{C^0(M)}\leq d_0$, then the normalized Ricci-DeTurck flow $g(t)$ starting from $g(0)$ exists for all time. 

Furthermore, given an integer $k\geq 2$. 
For any $D>0$, there exists $d=d(k, D)\leq d_0$ with the following property. 
Let $g(t)$ be a normalized Ricci-DeTurck flow defined on $M\times [0, \infty)$ satisfying 
    \begin{equation*}
        \Vert g(0)-h_0\Vert _{C^k(M)}\leq d.
    \end{equation*}
Then \begin{equation*}
        \Vert g(t)-h_0\Vert _{C^k(M)}< D\quad \forall t\in [0,\infty).
    \end{equation*}
\end{Th}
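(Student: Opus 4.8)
The plan is to combine Bamler's global $C^0$ stability with the local $C^k$ persistence in Lemma~\ref{Lemma_local persist} via a continuity/bootstrap argument on the time interval. The first part of the statement is exactly Bamler's result (Proposition 2.8 of \cite{Bamler}, or the global version quoted in Section~\ref{section_rf_conv}): for $d_0$ small, $\Vert g(0)-h_0\Vert_{C^0(M)}\leq d_0$ forces the normalized Ricci-DeTurck flow to exist for all $t\geq 0$ and to stay $C^0$-close, in fact with $\Vert g(t)-h_0\Vert_{C^0(M)}\to 0$ exponentially. So I would only need to prove the $C^k$ bound.

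The key steps are as follows. Fix $k\geq 2$ and $D>0$, and let $T_{loc}=T_{loc}(k,D)$, $d_{loc}=d_{loc}(k,D)$ be as in Lemma~\ref{Lemma_local persist}. Since the $C^0$-norm decays, there is a time $T_*=T_*(d_{loc})$ after which $\Vert g(t)-h_0\Vert_{C^0(M)}$ is as small as we wish; the issue is only the finite window $[0,T_*]$ together with controlling the $C^k$-norm uniformly past $T_*$. First, I would choose $d\leq d_{loc}$ small enough (depending on $k,D$, and on $T_*$) so that on $[0,T_{loc}]$ Lemma~\ref{Lemma_local persist} gives $\Vert g(t)-h_0\Vert_{C^k(M)}<D$. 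Then I would reapply the local lemma on $[T_{loc},2T_{loc}]$: its hypothesis requires the $C^k$-norm at the restarting time to be $\leq d_{loc}$, which is \emph{not} what the previous step gave (it only gave $<D$). To close this gap I would instead run the interior derivative estimates of Lemma~\ref{lemma_derivative_estimates} globally in time: once the $C^0$-norm is below the threshold $\epsilon$ and the $C^1$-norm is bounded (Simon's estimate), the parabolic smoothing gives, for $t\geq 1$ say, $\Vert \nabla_{h_0}^{k+1}g(t)\Vert_{C^0(M)}\lesssim 1$ with a constant depending only on $k$ and the ambient geometry, hence a time-independent $C^k$ bound $\Vert g(t)-h_0\Vert_{C^k(M)}\leq C_k$ on $[1,\infty)$. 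This bound does not yet see $D$, so the final move is an interpolation: for $k\geq 2$ interpolate the $C^k$-norm between the decaying $C^0$-norm and the fixed $C^{k+1}$ bound, e.g.
\begin{equation*}
\Vert g(t)-h_0\Vert_{C^k(M)}\leq C\,\Vert g(t)-h_0\Vert_{C^0(M)}^{\theta}\,\bigl(1+\Vert g(t)-h_0\Vert_{C^{k+1}(M)}\bigr)^{1-\theta},\qquad \theta=\tfrac{1}{k+1},
\end{equation*}
so that exponential decay of the $C^0$-norm beats the fixed higher bound and makes the $C^k$-norm eventually smaller than $D$, uniformly for $t\geq T_D$ for some $T_D$ depending on $k,D$.

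With those pieces in hand the argument assembles as: pick $d$ small enough that the local lemma holds on $[0,T_{loc}]$ and, iterating it finitely many times (each iteration permissible because we can take $d$ small enough that the $C^k$-norm stays below $d_{loc}$ on $[0, T_D]$ — here one uses that on the compact window $[0,T_D]$ the flow depends continuously on the initial data in $C^k$, so shrinking $d$ shrinks $\sup_{[0,T_D]}\Vert g(t)-h_0\Vert_{C^k}$), we get $\Vert g(t)-h_0\Vert_{C^k(M)}<D$ on $[0,T_D]$; for $t\geq T_D$ the interpolation bound gives $\Vert g(t)-h_0\Vert_{C^k(M)}<D$ as well. The main obstacle is precisely the mismatch between the conclusion ``$<D$'' and the hypothesis ``$\leq d_{loc}$'' of the local lemma, which prevents a naive ``restart the clock'' induction; the resolution is to get a genuine \emph{time-independent} higher-order bound from parabolic smoothing (Lemma~\ref{lemma_derivative_estimates}(2) applied on unit time intervals, using that the $C^0$- and $C^1$-norms are controlled for all large $t$) and then trade regularity against the decaying $C^0$-norm by interpolation. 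A secondary technical point is justifying the continuous dependence on initial data in $C^k$ on the fixed finite window $[0,T_D]$, which follows from standard parabolic estimates for the (strictly parabolic) Ricci-DeTurck equation applied to the difference of two solutions.
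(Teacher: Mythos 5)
Your overall architecture (local persistence on $[0,T_{loc}]$ via Lemma~\ref{Lemma_local persist}, then higher-order control for later times) is close in spirit to the paper's, but the step you use to beat $D$ for large $t$ has a genuine gap: your interpolation argument hinges on the claim that Bamler's result gives global exponential decay $\Vert g(t)-h_0\Vert_{C^0(M)}\to 0$. That is not available here. What \cite{Bamler} provides, and what the paper cites, is only the time-uniform bound $\Vert g(t)-h_0\Vert_{C^0(M)}\leq C\Vert g(0)-h_0\Vert_{C^0(M)}$ together with convergence on compact sets; on a cusped finite-volume manifold there is no global unweighted $C^0$ decay at this stage. The exponential convergence results that do exist (Theorem~\ref{thm_ricci_flow} here, and the earlier result in \cite{Jiang-VargasPallete_RF}) are in the weighted norms $\mathfrak{h}^{2+\rho}_{\lambda,s}$, whose weight $\ww_\lambda$ decays in the cusps, so they do not control the unweighted $C^0(M)$ (let alone $C^k(M)$) norm; moreover Theorem~\ref{thm_ricci_flow} is itself proved using the very theorem you are proving, so invoking it would be circular. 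A secondary soft spot is your bridge over the window $[T_{loc},T_D]$: uniform continuous dependence on initial data in $C^k(M)$ over a long finite interval on a noncompact manifold is asserted, not proved, and is not a cheap fact (it is essentially of the same nature as the statement itself).

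Both issues disappear if you use the full strength of the smoothing estimate rather than only an $O(1)$ higher-order bound: Corollary 2.7 of \cite{Bamler} gives $\Vert \nabla^m_{h_0}g(t)\Vert_{C^0(M)}\leq C_m t^{-m/2}\Vert g(0)-h_0\Vert_{C^0(M)}$ for $t\in(0,1]$, i.e. bounds \emph{proportional to} the initial $C^0$-norm, and restarting at $t-1$ together with the global $C^0$ bound gives $\leq C_m C\Vert g(0)-h_0\Vert_{C^0(M)}$ for $t>1$. This is exactly the paper's route (phrased as a contradiction with a minimal first time $t_n$, which the local lemma forces to satisfy $t_n>T_{loc}$): for all $t\geq T_{loc}$ the $C^k$-norm is $\lesssim \max\bigl(T_{loc}^{-k/2},1\bigr)\,\Vert g(0)-h_0\Vert_{C^0(M)}\leq \mathrm{const}\cdot d<D$ once $d$ is small, with no decay, no interpolation, and no long-time continuous-dependence argument. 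Alternatively, your own scheme can be repaired by interpolating against the time-uniform smallness $Cd$ of the $C^0$-norm instead of a decaying quantity (then shrink $d$), but you would still need to justify the time-independent $C^{k+1}$ bound from $C^0$-small data on unit intervals (a cascade through Lemma~\ref{lemma_derivative_estimates}, or again Bamler's Corollary 2.7), at which point you are essentially reproducing the paper's argument.
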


\begin{proof} 
   Suppose by contradiction that there exist a sequence of normalized Ricci flows $g_n(t)$ defined on $M\times [0,\infty)$, and a sequence $d_n\rightarrow 0$ as $n\rightarrow\infty$, such that 
         \begin{equation*}
            \Vert g_n(0)-h_0\Vert _{C^k(M)}\leq d_n.
        \end{equation*}
    Moreover, there exists $t_n\in [0,\infty)$ such that  
    \begin{equation}\label{pf_persist_contrad}
        \Vert g_n(t_n)-h_0\Vert _{C^k(M)}\geq D.
    \end{equation}
    We also assume that $t_n$ is the minimum time for this property. 

   Let $T_{loc}$ and $d_{loc}$ be the constants provided by the local stability lemma. We also have $d_n\leq d_{loc}$ for sufficiently large $n$, which confirms the condition of Lemma~\ref{Lemma_local persist}. Thus, \begin{equation*}
        \Vert g_n(t)-h_0\Vert _{C^k(M)}< D \quad \forall t\in [0,T_{loc}].
    \end{equation*}
    This implies that $t_n>T_{loc}$.

    According to Section 6.2 of \cite{Bamler}, there exists a constant $C>0$ such that, if $\Vert g_n(0)-h_0\Vert _{C^0(M)}$ is sufficiently small (which can be ensured by choosing $n$ large enough), then
    \begin{equation*}
        \Vert g_n(t)-h_0\Vert _{C^0(M)}\leq C\Vert g_n(0)-h_0\Vert _{C^0(M)}\quad  \forall t\in [0,\infty).
    \end{equation*}
Furthermore, Corollary 2.7 of \cite{Bamler} provides the following estimate for order $m\in\mathbb{N}$. 
\begin{equation}\label{equ_m_derivative}
    \Vert \nabla_{h_0}^m g_n(t)\Vert _{C^{0}(M)}\leq C_mt^{-\frac{m}{2}}\Vert g_n(0)-h_0\Vert _{C^{0}(M)}  \quad \forall t\in (0,1],
\end{equation}
where $C_m>0$ is a constant independent of $n$. 
For $t>1$, it implies \begin{equation}\label{equ_m_derivative_2}
    \Vert \nabla_{h_0}^m g_n(t)\Vert _{C^{0}(M)}\leq C_m\Vert g_n(t-1)-h_0\Vert _{C^{0}(M)}\leq C_mC\Vert g_n(0)-h_0\Vert _{C^0(M)}.
\end{equation}

Since $\Vert g_n(0)-h_0\Vert _{C^{0}(M)}$ can be arbitrarily small for sufficiently large $n$, and since $t_n>T_{loc}$ stays away from zero, it follows from \eqref{equ_m_derivative} and \eqref{equ_m_derivative_2} that we can choose $n$ large enough so that for any $t\geq t_n$, \begin{equation*}
    \left\Vert g_n(t)-h_0\right\Vert _{C^k(M)}\leq D.
\end{equation*}
This contradicts the assumption \eqref{pf_persist_contrad}.
\end{proof}

\section{Long time behavior of Ricci-DeTurck flow}\label{section_rf_conv}
In this section, we review the long time behavior of the normalized Ricci-DeTurck flow and  its convergence toward the hyperbolic metric. In particular, we present a quantitative exponential decay estimate, which plays an essential role in the proof of Theorem~\ref{thm_volume_comparison}.
These results were originally introduced in \cite{Jiang-VargasPallete_RF}.

\subsection{Weighted little H{\"o}lder spaces}
First, we introduce weighted little H{\"o}lder spaces, and apply the interpolation theory. For closed hyperbolic $3$-manifolds, Knopf-Young \cite{Knopf-Young} studied the stability of the hyperbolic metric $h_0$ using Simonett's interpolation results \cite{Simonett}. They showed that starting from a metric in a little H{\"o}lder $\Vert \cdot\Vert _{2\alpha+\rho}$ neighborhood of $h_0$, the normalized Ricci-DeTurck flow converges exponentially fast in the $\Vert \cdot\Vert _{2+\rho}$ norm to $h_0$, where $\rho\in (0,1)$ and $\alpha\in (\frac{1}{2},1)$.

However, as explained in Section 5 of \cite{Jiang-VargasPallete_RF}, for the cusped manifolds, it is necessary to introduce an additional exponential weight in the thin part of the cusps.

To start our discussion, let $s>0$. For each $x\in M$, let $\Tilde{B}(x)\subset \mathbb{H}^3$ be the unit ball centered at a lift of $x$. For each tensor $l$ on $M$, the lift of $l$ on $\mathbb{H}^3$ is still denoted by $l$. We define the following weighted little H{\"o}lder spaces on $M$.

\begin{Def}[Weighted little H{\"o}lder spaces]\label{def_little_holder}
 Given $\lambda\in (0,1]$ and $s\geq 0$. 
    The \emph{weighted H{\"o}lder norm} $\Vert \cdot\Vert _{\mathfrak{h}^{k+\alpha}_{\lambda,s}}$ is defined as

\begin{align*}\label{equ_weighted norm}
    \Vert l\Vert _{\mathfrak{h}^{k+\alpha}_{\lambda,s}}: &= \sup_{x\in M} \ww_\lambda(x)\Vert l|_{\Tilde{B}(x)} \Vert_{\mathfrak{h}^{k+\alpha}}\\\nonumber
    &=\sup_{x\in M, 0\leq j\leq k} \left( \ww_\lambda(x)|\nabla^j\,l(x)| + \sup_{y_1\neq y_2\in \Tilde{B}(x)} \ww_\lambda(x)\frac{|\nabla^kl(y_1)-\nabla^kl(y_2)|}{d_{\Tilde{B}(x)}(y_1,y_2)^\alpha} \right)
\end{align*}
where 
\begin{align*}
\ww_\lambda(x) &=\begin{cases}
    e^{-\lambda r(x)}\quad \lambda\in (0,1),\\
    (r(x)+1)e^{-r(x)}\quad \lambda=1.
\end{cases}
\end{align*}
and
\begin{align*}
r(x) &=\begin{cases}
    0\quad\text{ if }x\in M(s),\\
    \text{dist}(x, \partial M(s))=\min_k (\text{dist}(x,T_k\times \{s\})\quad\text{otherwise}.
\end{cases}\\
\end{align*}
The $(r+1)$ multiplicative factor for $\ww_1$ is so that
\[\Vert l \Vert_{L^2(M)} \leq C_{\lambda,s} \Vert l\Vert _{\mathfrak{h}^{k+\alpha}_{\lambda,s}},
\]
holds.

As for fixed $\lambda$ the function $\ww_\lambda(x)$ satisfies
\[|\nabla^j \ww_\lambda(x)| \leq C_j \ww_\lambda(x)
\]
we can easily check that the norm $\Vert l \Vert_{\mathfrak{h}^{k+\alpha}_{\lambda,s}}$ is equivalent to
\[\sup_{x\in M, 0\leq j\leq k} \left( |\nabla^j(\ww_\lambda(x)\,l(x))| + \sup_{y_1\neq y_2\in \Tilde{B}(x)} \ww_\lambda(x)\frac{|\nabla^kl(y_1)-\nabla^kl(y_2)|}{d_{\Tilde{B}(x)}(y_1,y_2)^\alpha} \right)
\]

The \emph{little H{\"o}lder space} $\mathfrak{h}^{k+\alpha}_{\lambda,s}$ is defined to be the closure of $C_c^\infty$ symmetric covariant 2-tensors compactly supported in $M$ with respect to the weighted H{\"o}lder norm $\Vert \cdot\Vert _{\mathfrak{h}^{k+\alpha}_{\lambda,s}}$. 
\end{Def}

Moreover, for fixed $0<\sigma<\rho<1$, we define \begin{equation*}
    \mathcal{X}_{0}=\mathcal{X}_{0}(M,\rho,\lambda,s)=:\mathfrak{h}^{0+\rho}_{\lambda,s},\quad \mathcal{X}_{1}=\mathcal{X}_{1}(M,\rho,\lambda,s)=:\mathfrak{h}^{2+\rho}_{\lambda,s}.
\end{equation*} 

Next, we review the definition of interpolation spaces between $\mathcal{X}_0$ and $\mathcal{X}_1$. For further details, see \cite{Lunardi} and \cite{Triebel}.
For every $l\in \mathcal{X}_0 +\mathcal{X}_1$ and $t>0$, set \begin{equation*}
    K(t,l)=K(t,l;\mathcal{X}_0,\mathcal{X}_1):=\inf_{l=l_0+l_1,\, l_i\in\mathcal{X}_i}\left(\Vert l_0\Vert _{\mathcal{X}_0}+t\Vert l_1\Vert _{\mathcal{X}_1}\right).
\end{equation*}
For each $t$, it defines an equivalent norm for the space $\mathcal{X}_0+\mathcal{X}_1$. 
\begin{Def}[Interpolation spaces]\label{def_real/cts_interpolation}
    Let $0<\theta<1$, $1\leq p\leq \infty$, and define the following \emph{real interpolation spaces} between $\mathcal{X}_0$ and $\mathcal{X}_1$: 
        \item \begin{equation*}
            (\mathcal{X}_0,\mathcal{X}_1)_{\theta,p}:=\left\{l\in \mathcal{X}_0+\mathcal{X}_1: t\mapsto t^{-\theta}K(t,l)\in L^p_*(0,\infty)\right\},
        \end{equation*} 
        where $L^p_*$ is the $L^p$ space with respect to the measure $dt/t$. Note that the $L^\infty_*$ space coincides with the standard $L^\infty$ space. 
        The norm of $l\in (\mathcal{X}_0,\mathcal{X}_1)_{\theta,p}$ is given by \begin{equation*}
        \Vert l\Vert _{(\mathcal{X}_0,\mathcal{X}_1)_{\theta,p}}:= \Vert t^{-\theta}K(t,l)\Vert _{L^p_*(0,\infty)}.
        \end{equation*}
        Moreover, the \emph{continuous interpolation space} between $\mathcal{X}_0$ and $\mathcal{X}_1$ is defined as follows. \begin{equation*}
            (\mathcal{X}_0,\mathcal{X}_1)_{\theta}:= \left\{l\in \mathcal{X}_0+\mathcal{X}_1: \lim_{t\rightarrow 0^+}t^{-\theta}K(t,l)=\lim_{t\rightarrow\infty}t^{-\theta}K(t,l)=0\right\}.
        \end{equation*}
\end{Def}
Observe that the function $K(t,x)$ is continuous in terms of $t$, thus $(\mathcal{X}_0,\mathcal{X}_1)_{\theta}$ is a closed subspace of $(\mathcal{X}_0,\mathcal{X}_1)_{\theta,\infty}$ and it is endowed with the $(\mathcal{X}_0,\mathcal{X}_1)_{\theta,\infty}$-norm.

Let $\alpha\in (0,1)$ with $2\alpha+\rho\notin\mathbb{N}$, 
consider the continuous interpolation space $\mathcal{X}_{\alpha}:=(\mathcal{X}_{0}, \mathcal{X}_{1})_\alpha$, \cite[Corollary 5.3]{Jiang-VargasPallete_RF} proves that \begin{equation*}\label{equ_X_alpha}
    \mathcal{X}_{\alpha}=(\mathcal{X}_{0}, \mathcal{X}_{1})_\alpha \cong \mathfrak{h}_{\lambda,s}^{2\alpha +\rho}.
\end{equation*}

\subsection{Exponential attractivity}
We prove the following the exponential attractivity toward the hyperbolic metric, which uses the method of \cite[Theorem 1.1]{Jiang-VargasPallete_RF}.
It will be the key tool for proving the main theorem.
\begin{Th}\label{thm_ricci_flow}
Let $(M,h_0)$ be a hyperbolic $3$-manifold of finite volume, and let $\alpha\in (0,\frac{1-\rho}{2})\cup(\frac{1-\rho}{2},\frac12)$. Given $\lambda\in (0,1]$. For every
    $\omega \in (0,\lambda(2-\lambda))$, 
    there exist $\rho_0,c>0$, such that if $h$ is a metric on $M$ with 
    \begin{equation*}
  \Vert h-h_0\Vert _{C^2(M)}<\rho_0,
\end{equation*} then the solution $h(t)$ of the normalized Ricci-DeTurck flow starting at $h(0)=h$ exists for all time. 
Moreover, we have \begin{equation*}
    \Vert h(t)-h_0\Vert _{\mathcal{X}_{1}}\leq \frac{c}{t^{1-\alpha}} e^{-\omega t}\Vert h-h_0\Vert _{C^2(M)}, \quad \forall t>0.
\end{equation*}
\end{Th}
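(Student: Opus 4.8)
The plan is to follow the maximal-regularity / analytic-semigroup strategy of \cite[Theorem~1.1]{Jiang-VargasPallete_RF}, but upgrade the initial regularity from $C^0$ to $C^2$ so as to gain the short-time decay factor $t^{-(1-\alpha)}$ improving to a clean bound valid for all $t>0$. First I would set up the phase space: write $g = h - h_0$, so that the normalized Ricci-DeTurck flow \eqref{DRF} becomes an abstract semilinear equation $\partial_t g = \mathcal{A} g + \mathcal{F}(g)$ on the pair $(\mathcal{X}_0, \mathcal{X}_1) = (\mathfrak{h}^{0+\rho}_{\lambda,s}, \mathfrak{h}^{2+\rho}_{\lambda,s})$, where $\mathcal{A}$ is the linearization of the DeTurck operator at $h_0$ (the Lichnerowicz-type Laplacian plus lower-order terms) and $\mathcal{F}$ collects the quadratic-and-higher remainder, with $\mathcal{F}(0)=0$ and $D\mathcal{F}(0)=0$. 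The key analytic input, already available from \cite{Jiang-VargasPallete_RF}, is that $\mathcal{A}$ generates a strongly continuous analytic semigroup on $\mathcal{X}_0$ whose spectral bound on the weighted space $\mathfrak{h}^{k+\alpha}_{\lambda,s}$ is strictly negative: the hyperbolic metric is linearly stable and the exponential weight $\ww_\lambda$ shifts the bottom of the spectrum of the Laplacian on the cusp from $0$ down past $\lambda(2-\lambda)$. Hence for any $\omega \in (0, \lambda(2-\lambda))$ one has the smoothing estimates
\begin{equation*}
\Vert e^{t\mathcal{A}} \Vert_{\mathcal{X}_0 \to \mathcal{X}_0} \leq C e^{-\omega t}, \qquad \Vert e^{t\mathcal{A}} \Vert_{\mathcal{X}_0 \to \mathcal{X}_1} \leq C t^{-1} e^{-\omega t}, \qquad \Vert e^{t\mathcal{A}} \Vert_{\mathcal{X}_\alpha \to \mathcal{X}_1} \leq C t^{-(1-\alpha)} e^{-\omega t},
\end{equation*}
together with the interpolation identity $\mathcal{X}_\alpha = (\mathcal{X}_0,\mathcal{X}_1)_\alpha \cong \mathfrak{h}^{2\alpha+\rho}_{\lambda,s}$ recalled above. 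The restriction $\alpha \neq \tfrac{1-\rho}{2}$ is exactly to keep $2\alpha+\rho \notin \mathbb{N}$ so this identification is valid.

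Next I would establish short-time existence and a bound in $\mathcal{X}_\alpha$ starting from the $C^2$-hypothesis. Here Theorem~\ref{thm_stability_C^k} (with $k=2$) and Lemma~\ref{Lemma_local persist} do the work: choosing $\rho_0 \leq d_0$ small enough that $\Vert h - h_0 \Vert_{C^2(M)} < \rho_0$ forces the normalized Ricci-DeTurck flow to exist for all time and to stay $C^2$-small, uniformly in $t$. The point of demanding $C^2$ rather than $C^0$ is that $h - h_0 \in C^2 \subset \mathfrak{h}^{2\alpha+\rho}_{\lambda,s} = \mathcal{X}_\alpha$ for $\alpha < \tfrac12$ (after accounting for the weight, which only helps since $\ww_\lambda \leq 1$), so the initial datum already lives in the interpolation space $\mathcal{X}_\alpha$ with $\Vert h - h_0 \Vert_{\mathcal{X}_\alpha} \lesssim \Vert h - h_0 \Vert_{C^2(M)}$; I would record this as a short lemma. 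Then I run the Duhamel fixed-point argument in the weighted space: $g(t) = e^{t\mathcal{A}} g(0) + \int_0^t e^{(t-\tau)\mathcal{A}} \mathcal{F}(g(\tau))\, d\tau$, working in the Banach space of curves with finite norm $\sup_{t>0} t^{1-\alpha} e^{\omega t} \Vert g(t) \Vert_{\mathcal{X}_1} + \sup_{t>0} e^{\omega t}\Vert g(t)\Vert_{\mathcal{X}_\alpha}$. Using that $\mathcal{F}$ is (locally) Lipschitz from $\mathcal{X}_1$ to $\mathcal{X}_0$ with small Lipschitz constant near $0$ — because $\mathcal{F}$ is quadratic in $g$ and $g$ is $C^2$-small — the convolution term is estimated by $\int_0^t (t-\tau)^{-1} e^{-\omega(t-\tau)} \Vert \mathcal{F}(g(\tau))\Vert_{\mathcal{X}_0}\, d\tau \lesssim \int_0^t (t-\tau)^{-1} e^{-\omega(t-\tau)} \tau^{-2(1-\alpha)} e^{-2\omega\tau} \,d\tau$, and the Beta-type integral $\int_0^t (t-\tau)^{-1}\tau^{-2(1-\alpha)} d\tau$ converges near $\tau=t$ because $1 < 1$ fails — so in fact one should split off the $\mathcal{X}_0 \to \mathcal{X}_1$ bound more carefully, pairing it with an $L^\infty$-in-time control of $\Vert g \Vert_{\mathcal{X}_\alpha}$ rather than the singular $\mathcal{X}_1$ norm, exactly as in \cite[\S 6]{Jiang-VargasPallete_RF}; the outcome is a contraction for $\rho_0$ small, giving the fixed point $g(t)$ and the estimate $\Vert g(t)\Vert_{\mathcal{X}_1} \leq c\, t^{-(1-\alpha)} e^{-\omega t}\Vert h - h_0\Vert_{\mathcal{X}_\alpha} \leq c\, t^{-(1-\alpha)}e^{-\omega t}\Vert h - h_0\Vert_{C^2(M)}$.

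Finally I would reconcile this fixed-point solution with the geometric Ricci-DeTurck flow: uniqueness in the class of $C^0$-small solutions (Bamler's short-time uniqueness, already used in Lemma~\ref{Lemma_local persist}) identifies the abstract $g(t)$ with $h(t) - h_0$, and global existence is guaranteed by Theorem~\ref{thm_stability_C^k}, so the decay estimate propagates for all $t > 0$. \textbf{The main obstacle} I anticipate is the convolution estimate near $\tau = t$: the $\mathcal{X}_0 \to \mathcal{X}_1$ smoothing has a non-integrable $(t-\tau)^{-1}$ singularity, so the naive Duhamel bound in the $\mathcal{X}_1$-norm does not close by itself. The fix — carried out in \cite{Jiang-VargasPallete_RF} and which I would adapt verbatim — is to run the contraction in the intermediate norm $\mathcal{X}_\alpha$ (where the smoothing singularity is the integrable $(t-\tau)^{-(1-\alpha)}$) and only afterward bootstrap to $\mathcal{X}_1$; keeping track of the weight $\ww_\lambda$ throughout, and verifying that $\mathcal{F}$ maps the weighted $\mathcal{X}_1$ into the weighted $\mathcal{X}_0$ with the right constant, is the second delicate point, but both are by now standard in this framework. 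A minor additional check is that the $C^2$-smallness hypothesis, rather than smallness in the weighted $\mathfrak{h}^{2+\rho}$ norm, suffices to start the machine — this is where Lemma~\ref{lemma_derivative_estimates} and Theorem~\ref{thm_stability_C^k} are essential, since they convert the unweighted $C^2$ control at $t=0$ into uniform (weighted) control for small $t>0$.
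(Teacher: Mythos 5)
Your proposal is correct and takes essentially the same route as the paper: global existence and uniform $C^2$-smallness from Theorem~\ref{thm_stability_C^k} (with $k=2$), the $t^{-1/2}$ bound on $\nabla^{3}_{h_0}h(t)$ from Lemma~\ref{lemma_derivative_estimates}, the identification $\mathcal{X}_\alpha\cong\mathfrak{h}^{2\alpha+\rho}_{\lambda,s}$ together with $\Vert h-h_0\Vert_{\mathcal{X}_\alpha}\lesssim\Vert h-h_0\Vert_{C^2(M)}$, and the Duhamel representation driven by the spectral bound that $\Re(\omega_c)>-\lambda(2-\lambda)$ lies in the resolvent set of $A_{h_0}$. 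The only (cosmetic) difference is that the paper does not rerun a contraction-mapping argument: it verifies that the geometric flow itself lies in the maximal-regularity class $C^1_\alpha\left((0,\infty),\mathcal{X}_0,\mathcal{X}_1\right)$ — this is precisely where $\alpha<\tfrac12$ and the $t^{-1/2}$ estimate are used — identifies it with the unique maximal-regularity solution of the linearized problem with inhomogeneity $\left(\mathcal{A}(h(t))-A_{h_0}\right)h(t)$, and then invokes the estimates of Section 7 of \cite{Jiang-VargasPallete_RF}, which is exactly the bootstrap through $\mathcal{X}_\alpha$ that you describe as the fix for the $(t-\tau)^{-1}$ singularity.
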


\begin{proof}
We first review some notations from \cite{Jiang-VargasPallete_RF}.
Define \begin{align}\label{equ_C_alpha}
    C_\alpha^0\left((0,\infty),\mathcal{X}_0\right):=&\left\{F\in C^0((0,\infty),\mathcal{X}_0): \lim_{t\rightarrow 0}t^{1-\alpha}\Vert F(t)\Vert _{\mathcal{X}_0}=0\right\},\\\nonumber
    C_\alpha^1\left((0,\infty),\mathcal{X}_0,\mathcal{X}_1\right):=&\Big\{g\in C^1\left((0,\infty),\mathcal{X}_0\right)\cap C^0\left((0,\infty),\mathcal{X}_1\right):\\\nonumber
    &\lim_{t\rightarrow 0}t^{1-\alpha}\left(\Vert g'(t)\Vert _{\mathcal{X}_0}+\Vert g(t)\Vert_{\mathcal{X}_1}\right)=0\Big\}.
\end{align}
Consider the linear problem \begin{equation}\label{equ_linear_A}
    \frac{\partial}{\partial t}g(t)=A g(t)+F(t),
\end{equation}
with initial data $g(0)$. The map $g(t)\mapsto g(0)$ is denoted by $I_\alpha$. 
Let \begin{align*}
    \mathcal{H}(\mathcal{X}_1,\mathcal{X}_0):=&\big\{A\in \mathcal{L}(\mathcal{X}_1,\mathcal{X}_0):\\
    &A\text{ generates a strongly continuous analytic semigroup}\big\},\\
    \mathcal{M}_\alpha(\mathcal{X}_1,\mathcal{X}_0):=& \Big\{A\in \mathcal{H}(\mathcal{X}_1,\mathcal{X}_0): \\
    &(\partial_t-A,I_\alpha)\in Isom \left(C_\alpha^1((0,\infty),\mathcal{X}_0,\mathcal{X}_1),C_\alpha^0((0,\infty),\mathcal{X}_0)\times \mathcal{X}_\alpha\right)\Big\}.
\end{align*}
In other words, $\mathcal{M}_\alpha(\mathcal{X}_1,\mathcal{X}_0)\subset \mathcal{H}(\mathcal{X}_1,\mathcal{X}_0)$ consists of the operators for which the differential equation \eqref{equ_linear_A} admits a unique solution $g(t)\in C_\alpha^1\left((0,\infty),\mathcal{X}_0,\mathcal{X}_1\right)$ for any given pair $(F,g(0))\in C_\alpha^0\left((0,\infty),\mathcal{X}_0\right)\times \mathcal{X}_\alpha$. 

Suppose that $A_{h_0}\in \mathcal{M}_\alpha(\mathcal{X}_1,\mathcal{X}_0)$,
the stability theorem for the Ricci-DeTurck flow then allows us to express the solution as \begin{equation}\label{equ_integral_sol}
    h(t)= e^{tA_{h_0}} h(0)+\int_0^t e^{(t-s)A_{h_0}}(\mathcal{A}(h(s))-A_{h_0})h(s)\,ds,
\end{equation}
where $\mathcal{A}(h)$ will denote the right-hand side of the normalized Ricci-DeTurck flow, $A_{h_0}$ (which will take the role of $A$ in \eqref{equ_linear_A}) the linearization of the flow at the fixed hyperbolic metric and $(\mathcal{A}(h(t))-A_{h_0})h(t)$ will take the role of $F(t)$ in \eqref{equ_linear_A}. 

As argued in \cite{Jiang-VargasPallete_RF}, we need to verify $A_{h_0}\in\mathcal{M}_\alpha(\mathcal{X}_1,\mathcal{X}_0)$, and then use the form \eqref{equ_integral_sol} to derive the exponential attractivity.

Let $\epsilon>0$ be a sufficiently small constant. Applying the stability theorem (Theorem~\ref{thm_stability_C^k}) with order $k=2$, we obtain a constant $\delta>0$ such that if the initial tensor $h(0)\in \mathcal{X}_\alpha$ is in the $\delta$-neighborhood of $h_0$ in $C^{2}$, then the corresponding normalized Ricci-DeTurck flow $h(t)$ remains in the $\epsilon$-neighborhood of $h_0$ in $C^2$ for all time.
In particular we have that $A_{h(t)}\in \mathcal{L}(\mathcal{X}_1,\mathcal{X}_0)$.

By Lemma~\ref{lemma_derivative_estimates},  \begin{equation*}
    \Vert \nabla^3h(t)\Vert _{C^0(M)}\lesssim t^{-\frac12},\quad\forall t\in (0,1].
\end{equation*}
Thus we have $h(t)\in C^1\left((0,\infty),\mathcal{X}_0\right)\cap C^0\left((0,\infty),\mathcal{X}_1\right)$, and since $\alpha<\frac12$, \begin{equation*}
    \lim_{t\rightarrow 0}t^{1-\alpha}\left(\Vert h'(t)\Vert _{\mathcal{X}_0}+\Vert h(t)\Vert _{\mathcal{X}_1}\right)\lesssim \lim_{t\rightarrow 0}t^{1-\alpha}\Vert h(t)\Vert _{C^3(M)}\lesssim \lim_{t\rightarrow 0}t^{1-\alpha}t^{-\frac12}=0.
\end{equation*}
It shows that $h(t)\in C_\alpha^1\left((0,\infty),\mathcal{X}_0,\mathcal{X}_1\right)$, as defined in \eqref{equ_C_alpha}.
Moreover, one can easily see that $F(t):=\left(\mathcal{A}(h(t))-A_{h_0}\right)h(t)\in C^0\left((0,\infty),\mathcal{X}_0\right)$. Moreover, \begin{equation*}
    \lim_{t\rightarrow 0}t^{1-\alpha}\Vert F(t)\Vert _{\mathcal{X}_0}\lesssim  \lim_{t\rightarrow 0}t^{1-\alpha}\Vert h(t)\Vert _{\mathcal{X}_1}\lesssim  \lim_{t\rightarrow 0}t^{1-\alpha}\Vert h(t)\Vert _{C^3(M)}\lesssim \lim_{t\rightarrow 0}t^{1-\alpha}t^{-\frac12}=0,
\end{equation*}
which implies $F(t)\in C_\alpha^0\left((0,\infty),\mathcal{X}_0\right)$.

\cite[Section 6]{Jiang-VargasPallete_RF} shows that $A_{h_0}\in\mathcal{H}(\mathcal{X}_1,\mathcal{X}_0)$. Combined with the argument above, this implies that $A_{h_0}\in \mathcal{M}_\alpha(\mathcal{X}_1,\mathcal{X}_0)$. 
As a consequence, the maximal regularity property implies that there exists solution $H(t)\in C_\alpha^1\left((0,\infty),\mathcal{X}_0,\mathcal{X}_1\right)$ to the linear equation 
\begin{equation*}
\begin{cases}
    \dfrac{\partial}{\partial t}H(t)=A_{h_0}H(t)+\left(\mathcal{A}(h(t))-A_{h_0}\right)H(t),\\
    H(0)=h(0).
\end{cases}
\end{equation*}
Such solution can be expressed by the integral formula \begin{equation*}
    H(t):=e^{tA_{h_0}} h(0)+\int_0^t e^{(t-s)A_{h_0}}\left(\mathcal{A}(h(s))-A_{h_0}\right)h(s)\,ds,
\end{equation*}
for $t\in [0,\infty)$.

We observe that $h(t)\in C_\alpha^1\left((0,\infty),\mathcal{X}_0,\mathcal{X}_1\right)$ also solves the linear system, and hence $h(t)=H(t)$ for all $t\in [0,\infty)$. In other words, the DeTurck flow $h(t)$ takes the following form.
\begin{equation*}
    h(t)=e^{tA_{h_0}} h(0)+\int_0^t e^{(t-s)A_{h_0}}\left(\mathcal{A}(h(s))-A_{h_0}\right)h(s)\,ds.
\end{equation*}
Let $l(t):=h(t)-h_0$. We obtain 
\begin{equation}\label{equ_l_A_exp}
    l(t)=e^{tA_{h_0}} l(0)+\int_0^t e^{(t-s)A_{h_0}}\left(\mathcal{A}(h(s))(h(s))-\mathcal{A}(h_0)(h_0)-A_{h_0}(l(s))
    \right)\,ds.
\end{equation}

Using the same estimates of \eqref{equ_l_A_exp} in \cite[Section 7]{Jiang-VargasPallete_RF}, which relies on the fact that any complex number $\omega_c$ with $\Re(\omega_c) > -\lambda(2-\lambda)$ lies in the resolvent set of $A_{h_0}$, we obtain the desired result. Specifically, for every real number $\omega\in (0,\lambda(2-\lambda))$, there exists constants $\rho, c, c'>0$, such that if $\Vert l(0)\Vert_{C^2(M)}=\Vert h-h_0\Vert_{C^2(M)}<\rho_0$, then
\begin{equation*}
    \Vert l(t)\Vert _{\mathcal{X}_1}\leq \frac{c'}{t^{1-\alpha}} e^{-\omega t}\Vert l(0)\Vert _{\mathcal{X}_\alpha}\leq \frac{c}{t^{1-\alpha}}e^{-\omega t}\Vert l(0)\Vert _{C^2(M)},\quad \forall t> 0.
\end{equation*}
\end{proof}

In Section~\ref{section_proof}, given any metric on $M$ with scalar curvature bounded below by $-6$, we will construct a new metric that is sufficiently close to the hyperbolic metric while carefully tracking the change in volume. 
This allows us to invoke the above theorem.

\section{volume comparison}\label{section_proof}

In this section, we present the proof of Theorem~\ref{thm_volume_comparison}. 
First, given an arbitrary metric $h$, we check the condition of Theorem~\ref{thm_ricci_flow} by applying the Ricci flow with bubbling-off. Our goal is to find a finite time at which the evolution of the metric, starting from $h$, becomes sufficiently close to $h_0$ in $C^2$. However, achieving this is not always possible. First, for a general initial metric $h$, there may be neither long-time nor short-time existence of the Ricci flow. Therefore, to construct the Ricci flow in such settings, we approximate $h$ by a sequence of cusp-like metrics $\{h_i\}$, and run the normalized Ricci flow starting from each $h_i$.
Furthermore, if $h$ is asymptotic to a hyperbolic metric in the cusp that differs from $h_0$, then according to the stability of cusp-like structures (Theorem~\ref{thm_stability_cusp}), this asymptotic behavior persists for all time. Consequently, $h(t)$ remains distant from $h_0$ and never becomes close in $C^2$. 
To address this issue, we will define each new metric $h_i$ as asymptotically to $h_0$ at the cuspidal end.

 \subsection{Mixed flows and exponential attractivity}
We start by considering two special cases:
\begin{enumerate}[(I)]
    \item Let $\epsilon>0$ be sufficiently small. According to Theorem~\ref{thm_ricci_flow}, 
 if $h$ satisfies $\Vert h-h_0\Vert _{C^2(M)}\leq\epsilon$, then the long-time existence of the normalized Ricci-DeTurck flow was established in that theorem.
 
    \item In a different setting, if $h$ is asymptotically cusped of order $k\geq 2$, then by Theorem~\ref{thm_stability_cusp_asymp}, there exists a normalized Ricci flow with bubbling-off on $M$ starting from $h$, defined for all time.

\end{enumerate}
We will examine these two cases in greater detail in the rigidity part of the proposition in Section~\ref{subsection_rigidity}.

For the general case, we choose a sequence $\{s_i\}$ with $s_i\rightarrow\infty$ as $i\rightarrow\infty$. 
Then, we define a new metric $h_i$ on $M$ using $s_i$, such that 
\begin{align}\label{equ_h_i}
    \bullet & \,h_i=h\text{ on the thick part }M(s_i),\\\nonumber
    \bullet & \,h_i=h_0\text{ on the thin part }M\setminus M(2s_i)=\cup_j T_j\times (2s_i,\infty),\\\nonumber
    \bullet &\, h_i\text{ is a smooth interpolation between the metrics }h\text{ and }h_0\text{ on }M(2s_i)\setminus M(s_i)\\\nonumber
    &=\cup_j T_j\times (s_i,2s_i], \text{ and }R(h_i)\geq -6.
\end{align}

The volume of $M$ satisfies that \begin{equation*}
    \vol_h(M)=\lim_{i\rightarrow\infty} \vol_{h}(M(s_i))=\lim_{i\rightarrow\infty} \vol_{h_i}(M(s_i)). 
\end{equation*}

For each $i\in\mathbb{N}$, suppose that $h_i(t)$ solves the normalized Ricci flow \eqref{RF}, starting with $h_i(0)=h_i$.

Recall the notion of Ricci flow with bubbling-off in Section~\ref{subsection_bubbling}. Since our initial metric $h_i$ is identical to $h_0$ on the thin part $M \setminus M(2s_i)$, 
it possesses a cusp-like structure, which permits us to perform Ricci flow with bubbling-off on $M$ starting at $h_i$. According to Theorem~\ref{thm_stability_cusp}, $h_i(t)$ exists for all time and remains asymptotic to $h_0$ at infinity in the cuspidal end in $C^k$, uniformly for all time $t\in [0,\infty)$.

Furthermore, because of the reduction in volume through surgery, there can only be a finite number of surgeries \cite[Section 3]{Bessieres-Besson-Maillot}. The only possible surgeries are pinching off inessential $\delta$-necks and attaching $\delta$-almost standard caps. This finite number is represented as $m_i\in \mathbb{N}$, and the last singular time is denoted by $t_i^{m_i}$.

For each $i\in\mathbb{N}$, let $\mathcal{X}_j$, $j=0,1$, be the weighted H{\"o}lder spaces where the weight is applied starting at $s_i$, we obtain the following corollary from Theorem~\ref{thm_ricci_flow}.
\begin{Cor}\label{cor_simonett}
      Given $\lambda\in (0,1]$. For every
    $\omega \in (0,\lambda(2-\lambda))$, there exist $\rho_i, c>0$, such that if $g_i$ is a metric on $M$ with \begin{equation*}
    \Vert g_i-h_0\Vert _{C^2(M)}<\rho_i,
\end{equation*} then the solution $g_i(t)$ of the normalized Ricci-DeTurck flow starting at $g_i(0)=g_i$ satisfies \begin{equation*}
    \Vert g_i(t)-h_0\Vert _{\mathfrak{h}^{2+\rho}_{s_i}(M)}\leq \frac{c\rho_i}{t^{1-\alpha}} e^{-\omega t},\quad \forall t>0.
\end{equation*} 
\end{Cor}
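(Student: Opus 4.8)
The corollary is essentially a restatement of Theorem~\ref{thm_ricci_flow} with the weighted little Hölder spaces $\mathcal{X}_0,\mathcal{X}_1$ defined so that the weight turns on at radius $s_i$ rather than at a fixed scale. The plan is therefore to run the proof of Theorem~\ref{thm_ricci_flow} verbatim, checking only that every ingredient is insensitive to the choice of basepoint $s$ for the weight function $\ww_\lambda$, and then to track the dependence of the constants on $i$ (equivalently, on $s_i$). First I would invoke Theorem~\ref{thm_ricci_flow} directly with the spaces $\mathcal{X}_j = \mathcal{X}_j(M,\rho,\lambda,s_i)$: for each fixed $i$, the hyperbolic metric $h_0$ is still a finite-volume hyperbolic $3$-manifold, the interpolation identification $\mathcal{X}_\alpha \cong \mathfrak{h}^{2\alpha+\rho}_{\lambda,s_i}$ of \cite[Corollary 5.3]{Jiang-VargasPallete_RF} holds for any $s\geq 0$, and the spectral fact that every $\omega_c$ with $\Re(\omega_c) > -\lambda(2-\lambda)$ lies in the resolvent set of $A_{h_0}$ depends only on the hyperbolic geometry of $(M,h_0)$, not on $s_i$. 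This yields, for each $i$, a threshold $\rho_i>0$ and a constant $c_i>0$ such that $\Vert g_i-h_0\Vert_{C^2(M)} < \rho_i$ implies
\[
\Vert g_i(t)-h_0\Vert_{\mathcal{X}_1} = \Vert g_i(t) - h_0 \Vert_{\mathfrak{h}^{2+\rho}_{\lambda,s_i}(M)} \leq \frac{c_i}{t^{1-\alpha}} e^{-\omega t}\Vert g_i-h_0\Vert_{C^2(M)} < \frac{c_i\rho_i}{t^{1-\alpha}} e^{-\omega t}, \quad \forall t>0.
\]

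The one point that needs care is the claim that the constant $c$ in the statement can be taken \emph{independent of $i$} (only $\rho_i$ is allowed to depend on $i$). For this I would observe that the weighted norm $\Vert\cdot\Vert_{\mathfrak{h}^{k+\alpha}_{\lambda,s}}$ is monotone in a suitable sense: on the thick part $M(s)$ the weight is identically $1$ regardless of $s$, and enlarging $s$ only makes the weight $\ww_\lambda$ \emph{larger} pointwise (the distance $r(x)$ to $\partial M(s)$ decreases as $s$ increases), so $\Vert l\Vert_{\mathfrak{h}^{k+\alpha}_{\lambda,s}} \leq \Vert l\Vert_{\mathfrak{h}^{k+\alpha}_{\lambda,s'}}$ whenever $s \leq s'$ — wait, that inequality goes the wrong way, so instead I would argue that the constants produced by the maximal regularity machinery (the norm of the isomorphism $(\partial_t - A_{h_0}, I_\alpha)$, the semigroup bounds, the local-existence constants from Lemma~\ref{lemma_derivative_estimates} and Theorem~\ref{thm_stability_C^k}) all depend only on $h_0$ and on structural data ($\lambda,\rho,\alpha,\omega$) that are fixed, because the relevant PDE estimates are carried out ball-by-ball on unit balls $\tilde B(x)\subset\mathbb{H}^3$ where the weight contributes only through the factors $|\nabla^j\ww_\lambda|\leq C_j\ww_\lambda$, whose constants $C_j$ are independent of $s$. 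Thus the \emph{multiplicative} constant $c$ in the exponential decay estimate is uniform in $i$; what may genuinely shrink with $i$ is the size $\rho_i$ of the $C^2$-ball of admissible initial data, since the smallness required to keep $g_i(t)$ inside the $\epsilon$-neighborhood where the linearization $A_{g_i(t)}$ behaves well, and to absorb the quadratic error term $(\mathcal{A}(h)-A_{h_0})h$, is governed by the weighted norm of the perturbation, and converting a $C^2$-bound into a weighted $\mathfrak{h}^{2+\rho}_{\lambda,s_i}$-bound costs a factor depending on $s_i$.

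The main obstacle, then, is precisely this bookkeeping: verifying that in the chain of estimates of \cite[Sections 6–7]{Jiang-VargasPallete_RF} the dependence on the weight basepoint $s$ enters \emph{only} in the conversion between the uniform $C^2$-norm of the initial datum and its weighted $\mathcal{X}_\alpha$-norm, and nowhere in the resolvent/semigroup estimates themselves. I expect this to be routine but slightly delicate, because the weighted norm of $g_i - h_0$ could a priori be much larger than its $C^2$-norm if $g_i - h_0$ fails to decay in the cusp — however, by the construction \eqref{equ_h_i} of the metrics $h_i$, we have $h_i - h_0 \equiv 0$ on $M\setminus M(2s_i)$, so the perturbation is \emph{compactly supported} in $M(2s_i)$, on which $\ww_\lambda$ is bounded below by a constant (depending on $s_i$); hence $\Vert g_i - h_0\Vert_{\mathfrak{h}^{2+\rho}_{\lambda,s_i}} \lesssim \Vert g_i - h_0\Vert_{C^2(M)}$ with an implied constant depending on $i$, which is exactly why the threshold $\rho_i$ — and not $c$ — absorbs the $i$-dependence. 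With this observation the corollary follows immediately from Theorem~\ref{thm_ricci_flow}.
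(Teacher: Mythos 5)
Your proposal matches the paper's treatment: the corollary is obtained by applying Theorem~\ref{thm_ricci_flow} with the weighted spaces based at $s_i$, observing that the resolvent/semigroup and maximal-regularity constants depend only on $(M,h_0)$ and the fixed parameters $\lambda,\rho,\alpha,\omega$, and absorbing any residual $i$-dependence into the threshold $\rho_i$ so that $c$ is uniform in $i$ --- exactly the paper's (one-line) remark. One small correction to your bookkeeping: since $\ww_\lambda \leq 1$ for every basepoint $s$, the comparison $\Vert g_i-h_0\Vert_{\mathfrak{h}^{2\alpha+\rho}_{\lambda,s_i}} \lesssim \Vert g_i-h_0\Vert_{C^2(M)}$ holds with a constant independent of $i$ and requires neither compact support of the perturbation nor a lower bound on the weight, so this conversion is not actually the source of any $i$-dependence (which is harmless, as placing it in $\rho_i$ is permitted by the statement anyway).
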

Note that by choosing $\rho_i$ sufficiently small, we can ensure that the constant $c$ in the corollary does not depend on $i$.

Due to the convergence of $h_i(t)$ toward $h_0$ on the thick part (\cite[Theorem 1.2]{Bessieres-Besson-Maillot}), there exists a post-surgery time $t_i>t_i^{m_i}$ 
such that \begin{equation*}
   \Vert h_i(t_i)-h_0\Vert _{C^2(M(s_i))}<\rho_i.
\end{equation*}
If, on the thin part $M\setminus M(s_i)$, $h_i(t_i)$ is not in the $C^2$-neighborhood of $h_0$ of radius $\rho_i$, we replace $h_i(t_i)$ with $h_{i+}(t_i)$ on $M\setminus M(s_i)$ so that the new metric agrees with $h_0$ on a further thin part, and it satisfies \begin{equation*}
    \Vert h_{i+}(t_i)-h_0\Vert _{C^2(M)}<\rho_i.
\end{equation*}
This verifies the condition of Corollary~\ref{cor_simonett}. Therefore, we get
\begin{equation*}
    \Vert h_i(t)-h_0\Vert _{\mathfrak{h}^{2+\rho}_{s_i}(M)}\leq \frac{c\rho_i}{(t-t_i)^{1-\alpha}}e^{-\omega (t-t_i)},\quad \forall t> t_i,
\end{equation*}

Now we redefine $h_i(t)$ as a mixed flow:
For $0\leq t< t_i$, $h_i(t)$ is still the normalized Ricci flow. And for $t\geq t_i$, it solves the normalized Ricci-DeTurck flow starting with $h_i(t_i):=h_{i+}(t_i)$.

\subsection{Volume comparison}
We now compare the volume of each thick region $M(s_i)$ with respect to different metrics using the mixed flow $h_i(t)$. 
The volume function $\vol_{h_i(t)}(M(s_i))$ is differentiable almost everywhere on both intervals $[0,t_i)$ and $[t_i,\infty)$. If $t$ is non-singular time, 
then we have \begin{align*}
    &\frac{d}{dt}\vol_{h_i(t)}(M(s_i))\\
    =&\int_{M(s_i)}\frac{d}{ds}\Big|_{s=t}\sqrt{\text{det}_{h_i(t)}h_i(s)}\dvol\\\nonumber
    =&\frac{1}{2}\int_{M(s_i)}\tr_{h_i(t)}\left(\frac{d}{ds}\Big|_{s=t}\,h_i(s)\right)\dvol\\\nonumber
    =& 
    \begin{cases}
        \displaystyle-\int_{M(s_i)} \big(R(h_i(t))+6\big)\dvol &t< t_i,\\
       \displaystyle -\int_{M(s_i)} \big(R(h_i(t))+6\big)\dvol+\int_{\partial M(s_i)}\langle V(h_i(\tau)),\nu\rangle \dvol &t\geq t_i.
    \end{cases}
\end{align*}
Therefore, the volume of $M(s_i)$ with respect to the metric $h_i(t)$ satisfies the following inequality. For $t\geq t_i$,
 \begin{align*}
    \vol_{h_i(t)}(M(s_i))=&\vol_{h_i(t_i)}(M(s_i))-\int_{t_i}^t\int_{M(s_i)} (R(h_i(\tau))+6)\dvol\,d\tau\\
    &+\int_{t_i}^t\int_{\partial M(s_i)}\langle V(h_i(\tau)),\nu\rangle \dvol\,d\tau.
\end{align*}
Note that $ \vol_{h_i(t_i)}(M(s_i))=\vol_{h_{i+}(t_i)}(M(s_i))$, and the inequality $R(h_i(t))\geq -6$ is preserved by the normalized Ricci flow and DeTurck flow.
Moreover, since surgeries can only decrease volume, and it also preserves $R(h_i(t))\geq -6$ (Definition 4.4.3, \cite{BBB+10}), for $t\geq t_i$ we have 
 \begin{align}\label{equ_vol_M(s)}
   & \vol_{h_i(t)}(M(s_i))\\\nonumber
    \leq& \vol_{h_i}(M(s_i))-\int_{0}^t\int_{M(s_i)} (R(h_i(\tau))+6)\dvol\,d\tau+\int_{t_i}^t\int_{\partial M(s_i)}\langle V(h_i(\tau)),\nu\rangle \dvol\,d\tau \\\nonumber 
    \leq&\vol_{h}(M(s_i))+\int_{t_i}^t\int_{\partial M(s_i)}\langle V(h_i(\tau)),\nu\rangle \dvol\,d\tau.
\end{align}

We estimate $ |V(h_i(\tau))|$ using the following lemma.
\begin{Lemma}\label{lemma_deturck}
Fix $\alpha\in (0,\frac{1-\rho}{2})\cup(\frac{1-\rho}{2},\frac12)$, and choose $\lambda\in (0,1)$.
    Let $h(t)$ be a normalized Ricci-DeTurck flow satisfying the assumptions of Theorem~\ref{thm_ricci_flow}, where the little H{\"o}lder spaces are defined with spatial parameter $s>0$. Then for any $\omega<\min(1-\lambda^2,\lambda(2-\lambda))$, there exists a constant $C=C(\lambda,\omega,\alpha)>0$ so that
    \begin{equation*}
        |V(h(x,t))|\leq Ce^{-\omega t+\lambda r(x)}\quad \forall x\in M,\,t\geq 0.
    \end{equation*}
\end{Lemma}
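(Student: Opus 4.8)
The plan is to bound the DeTurck vector field $V(h(t))$ pointwise by converting the definition \eqref{eq_V} of $V_j$ into an expression controlled by $h(t)-h_0$ and its first derivative, and then invoke the exponential decay estimate of Theorem~\ref{thm_ricci_flow} in the weighted norm $\mathcal{X}_1 = \mathfrak{h}^{2+\rho}_{\lambda,s}$. Observe first that $\Gamma_{pq}^k - (\Gamma_{h_0})_{pq}^k$ is a tensor, and since the Christoffel symbols of $h(t)$ and $h_0$ differ by a contraction of $h(t)^{-1}$ with $\nabla_{h_0}(h(t)-h_0)$, one has schematically
\begin{equation*}
    V_j = h_{jk}\, h^{pq}\,(\Gamma - \Gamma_{h_0})^k_{pq} = h(t)^{-1} \ast \nabla_{h_0}\big(h(t)-h_0\big),
\end{equation*}
where the coefficient $h(t)^{-1}$ is uniformly bounded because, by the stability input underlying Theorem~\ref{thm_ricci_flow}, $h(t)$ stays $\epsilon$-close to $h_0$ in $C^2(M)$ for all time. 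Hence pointwise $|V(h(x,t))|_{h_0} \lesssim |\nabla_{h_0}(h(t)-h_0)(x)|_{h_0}$, with an implicit constant depending only on $\epsilon$ and $h_0$.

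Next I would translate the first-derivative bound into the weighted norm. By the definition of $\Vert \cdot \Vert_{\mathfrak{h}^{2+\rho}_{\lambda,s}}$, for each $x$ the quantity $\ww_\lambda(x)\,|\nabla_{h_0}(h(t)-h_0)(x)|_{h_0}$ is dominated by $\Vert h(t)-h_0\Vert_{\mathfrak{h}^{2+\rho}_{\lambda,s}} = \Vert h(t)-h_0\Vert_{\mathcal{X}_1}$ (this is precisely the $j=1$ term in the sup defining the norm, evaluated on the lift to $\tilde B(x)$). Therefore
\begin{equation*}
    |\nabla_{h_0}(h(t)-h_0)(x)|_{h_0} \leq \ww_\lambda(x)^{-1}\,\Vert h(t)-h_0\Vert_{\mathcal{X}_1} \leq C\, e^{\lambda r(x)}\,\Vert h(t)-h_0\Vert_{\mathcal{X}_1},
\end{equation*}
using that $\ww_\lambda(x)^{-1} = e^{\lambda r(x)}$ for $\lambda \in (0,1)$ (the case $\lambda = 1$ being excluded in the hypothesis, so there is no logarithmic correction to worry about). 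Combining with the previous step and Theorem~\ref{thm_ricci_flow}, which gives $\Vert h(t)-h_0\Vert_{\mathcal{X}_1} \leq c\, t^{-(1-\alpha)} e^{-\omega' t}\,\rho_0$ for any $\omega' \in (0,\lambda(2-\lambda))$, yields
\begin{equation*}
    |V(h(x,t))|_{h_0} \leq C\, t^{-(1-\alpha)}\, e^{-\omega' t + \lambda r(x)} \quad \text{for } t > 0.
\end{equation*}

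The remaining task is cosmetic: absorb the factor $t^{-(1-\alpha)}$, which blows up as $t\to 0$, into the exponential so that the bound is valid and finite for all $t\geq 0$ with a clean exponential rate $\omega$. Since $\alpha < \tfrac12$, the function $t^{-(1-\alpha)} e^{-(\omega'-\omega)t}$ is, for $t\geq 1$, bounded by a constant times $e^{-(\omega'-\omega)t}$, so choosing any $\omega < \omega'$ absorbs it there; for $t\in(0,1]$ one must instead use the uniform $C^2$-closeness of $h(t)$ to $h_0$ directly — the argument of the first paragraph already gives $|V(h(x,t))|_{h_0} \lesssim |\nabla_{h_0}(h(t)-h_0)(x)|_{h_0} \lesssim e^{\lambda r(x)}\Vert h(t)-h_0\Vert_{C^2_{w}}$, which is bounded on $(0,1]$ provided one has a uniform weighted $C^1$ bound there. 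This is the one genuine subtlety, and I expect it to be the main obstacle: near $t=0$ the sharp rate from Theorem~\ref{thm_ricci_flow} degenerates, so one needs either the weighted version of Lemma~\ref{lemma_derivative_estimates} (giving $\Vert \nabla_{h_0}(h(t)-h_0)\Vert$ bounded, with the weight, on a short time interval) or simply the observation that $\mathcal{X}_\alpha \hookrightarrow \mathcal{X}_1$-type interpolation plus the $\mathcal{X}_\alpha$-boundedness of $h(0)-h_0$ controls the $t\in(0,1]$ regime. Once both regimes are handled, taking $C = C(\lambda,\omega,\alpha)$ to be the maximum of the two constants and noting $\omega < \min(1-\lambda^2, \lambda(2-\lambda))$ is exactly what is needed to have room both for the $t^{-(1-\alpha)}$ absorption and for the weight constraint $\lambda < 1$, completes the proof.
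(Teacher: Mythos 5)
Your argument is correct, but it takes a genuinely different route from the paper. You bound $V$ purely algebraically: since $\Gamma-\Gamma_{h_0}=\tfrac12 h^{-1}\ast\nabla_{h_0}(h-h_0)$ and $h(t)$ stays uniformly $C^2$-close to $h_0$, you get $|V(h(x,t))|\lesssim|\nabla_{h_0}(h(t)-h_0)(x)|\leq e^{\lambda r(x)}\Vert h(t)-h_0\Vert_{\mathcal{X}_1}$, and then you read the decay off Theorem~\ref{thm_ricci_flow}; this yields the bound for every $\omega<\lambda(2-\lambda)$, which is in fact slightly stronger than the stated range $\omega<\min(1-\lambda^2,\lambda(2-\lambda))$. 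The paper instead works with the parabolic evolution equation for $V_j$ (quoting Lemma 3.2 of Hu--Ji--Shi), bounds the forcing term $(\partial_t h\cdot h^{-1}+\dots)(\Gamma-\Gamma_{h_0})$ by the same weighted estimate $K_4\frac{c\rho_0}{t^{1-\alpha}}e^{-\omega' t}e^{\lambda r(x)}$ you use, and then sets $v=e^{-\lambda r}|V|$ and runs a maximum-principle/ODE-comparison argument; the zeroth-order term of that equation is where the extra constraint $\omega<1-\lambda^2$ enters, and the time integration automatically absorbs the $t^{\alpha-1}$ singularity. What the paper's route buys is a single self-contained estimate valid for all $t\geq 0$; what your route buys is brevity and a better rate, at the cost of treating small times separately. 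On that last point you are more hesitant than you need to be: no weighted $C^1$ estimate or interpolation argument is required on $(0,1]$, because the stability theorem (Theorem~\ref{thm_stability_C^k}, already invoked inside Theorem~\ref{thm_ricci_flow}) gives $\Vert h(t)-h_0\Vert_{C^2(M)}\leq\epsilon$ uniformly, hence $|V|\lesssim\epsilon$, while the right-hand side satisfies $e^{-\omega t+\lambda r(x)}\geq e^{-\omega}$ there since $r\geq 0$; so the small-time regime is trivial and the constant remains of the form $C(\lambda,\omega,\alpha)$, independent of $\rho_0$ (hence of $i$ when the lemma is fed into Corollary~\ref{cor_simonett}), exactly as the paper requires.
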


\begin{proof}
Let $l(t)=h(t)-h_0$.
    According to the calculation in Lemma 3.2 of \cite{Hu-Ji-Shi}, \begin{equation}\label{lemma4.2_1}
        \frac{\partial}{\partial t} V_j = \Delta_h V_j+R_j^kV_k+\left(\frac{\partial}{\partial t}h(t)_{jk}h(t)^{pq}+h(t)_{jk} \frac{\partial}{\partial t}h(t)^{pq}\right)\left(\Gamma_{pq}^k-(\Gamma_{h_0})_{pq}^k\right),
    \end{equation}
    and
    \begin{align*}
        & \left|\Gamma_{pq}^k-(\Gamma_{h_0})_{pq}^k\right|(x)\leq K_1\left(\Vert \nabla_{h_0}h(t)\Vert _{C^0(M,h_0)}|l(t)|(x)+|\nabla_{h_0}h(t)|(x)\right),\\
        & \left|\frac{\partial}{\partial t}h(t)\right|(x)\leq K_2 \left(|\nabla_{h_0}^2h(t)|(x)+|\nabla_{h_0}h(t)|(x)+|l(t)|(x)\right).\\
    \end{align*}
    Setting $\lambda\in(0,1)$ and $\omega'\in(0,\lambda(2-\lambda))$, then Theorem~\ref{thm_ricci_flow} yields a constant $\rho_0=\rho_0(\lambda,\omega')$. We can assume $\rho_0\leq d$ in Theorem~\ref{thm_stability_C^k} and apply the theorem with order $2$.
    Since $\Vert h-h_0\Vert_{C^2(M)}<\rho_0$, for each $t\geq 0$, $h(t)$ stays close to $h_0$ in $C^2$, hence we can choose the constants $K_1$ and $K_2$ depending only on $h_0,\lambda$ and $\omega'$. We will omit the dependence on $h_0$ from now on. 
    This stability result also provides a constant $K_3=K_3(\lambda,\omega')$, such that \begin{equation}
        \left|\frac{\partial}{\partial t}h(t)_{jk}h(t)^{pq}+h(t)_{jk} \frac{\partial}{\partial t}h(t)^{pq}\right|\leq K_3.
    \end{equation}
    Applying Theorem~\ref{thm_ricci_flow}, we obtain \begin{equation}
         \left|\Gamma_{pq}^k-(\Gamma_{h_0})_{pq}^k\right|(x)\leq K_4\frac{c\rho_0}{t^{1-\alpha}}e^{-\omega' t}e^{\lambda r(x)}.
    \end{equation}  
    Moreover, we may assume that $\rho_0\leq \frac12$, from which we have
    \begin{equation}\label{lemma4.2_2}
        |R_j^k+2\delta_j^k|=|R_j^k-(R(h_0))_j^k|\lesssim \Vert l(0)\Vert _{C^2(M)}\leq \frac12,
    \end{equation}
    where  $K_4=K_4(\lambda,\omega')$.

    Combining \eqref{lemma4.2_1}-\eqref{lemma4.2_2}, we get 
    \begin{equation*}
        \frac{\partial}{\partial t}|V|\leq \Delta_h |V|-\frac32|V|+\frac{K_5}{t^{1-\alpha}}e^{-\omega' t}e^{\lambda r(x)},
    \end{equation*}
    where $K_5\geq K_3K_4c\rho_0$. Note that we need to avoid any dependence on $\rho_0$. In particular, when combining this lemma with Corollary~\ref{cor_simonett}, we do not want the constants to depend on $i$. We may set $K_5=\frac12 K_3K_4c$, and assume that the constant $\rho_i$ obtained from the corollary satisfies $\rho_i\leq \frac12$ for all sufficiently large $i$.

    Let $v=e^{-\lambda r(x)}|V|$, then it satisfies the following inequality. \begin{equation*}
        \frac{\partial}{\partial t} v \leq \Delta_h v+2\lambda\nabla_h r\cdot\nabla_h v-\left(-\lambda\Delta_h r+\frac32-\lambda^2|\nabla_h r|^2\right)v +\frac{K_5}{t^{1-\alpha}}e^{-\omega' t}.
    \end{equation*}
    If $x\in \text{Int}M(s)$, then $r(x)=0$ and $\Delta_h r=\nabla_h r=0$. Otherwise, since $h$ is close to $h_0$ in $C^2$, we have $\nabla_h r=1+O(\Vert l(0)\Vert _{C^2(M)})$ and $\Delta_h r=O(\Vert l(0)\Vert _{C^2(M)})$. Therefore, for all $x\in M$, we have the inequality below. 
    \begin{align*}
        \frac{\partial}{\partial t} v
        \leq & \Delta_h v+2\lambda\nabla_h r\cdot\nabla_h v-\left(\frac32-\lambda^2-O(\Vert l(0)\Vert _{C^2(M)})\right)v +\frac{K_5}{t^{1-\alpha}}e^{-\omega' t}\\
        \leq& \Delta_h v+2\lambda\nabla_h r\cdot\nabla_h v-\left(1-\lambda^2\right)v+\frac{K_5}{t^{1-\alpha}}e^{-\omega' t},
    \end{align*}
   where $1-\lambda^2>0$. Solve the ODE for $t\geq 0$
    $$
    \begin{cases}
        \dfrac{du}{dt}=-\left(1-\lambda^2\right)u+\dfrac{K_5}{t^{1-\alpha}}e^{-\omega' t}, \\
        u(0)=\Vert v(0)\Vert_{C^0(M)}.
    \end{cases}
    $$
    We obtain
    \begin{equation*}
        u(t)= \left(u(0)+K_5\int_{0}^t\frac{e^{(1-\lambda^2-\omega')\tau}}{\tau^{1-\alpha}}\,d\tau\right) e^{-(1-\lambda^2)t}.
    \end{equation*}
   Assume that $\omega'<1-\lambda^2$, we have \ \begin{align*}
        u(t)\leq&\left(u(0)+K_5e^{(1-\lambda^2-\omega') t}\int_{0}^t\frac{1}{\tau^{1-\alpha}}\,d\tau\right) e^{-(1-\lambda^2)t}\\
        = & \Vert v(0)\Vert_{C^0(M)}e^{-(1-\lambda^2)t}+\frac{K_5}{\alpha}t^\alpha e^{-\omega' t}.
    \end{align*}
    From \eqref{eq_V}, observe that $\Vert v(0)\Vert_{C^0(M)}\lesssim \rho_0<1$, and let $\omega<\omega'$, then there exists a constant $C=C(\lambda,\omega,\alpha)$, such that $$\left(1+\frac{K_5}{\alpha}t^\alpha\right) e^{-\omega' t}\leq Ce^{-\omega t}.$$
    This implies that \begin{equation*}
        u(t)\leq Ce^{-\omega t},\quad \forall \omega<\min\left(1-\lambda^2,\lambda(2-\lambda)\right).
    \end{equation*}
   
    Furthermore, according to the maximum principle (see for instance Lemma 4.2 in \cite{Qing-Shi-Wu}), we have $v(\cdot,t)\leq u(t)$. Therefore, the following holds for all $t\geq 0$:
    \begin{equation*}
        |V(h(x,t))|\leq Ce^{-\omega t+\lambda r(x)}.
    \end{equation*}
\end{proof}

Note that the constant $C$ in Lemma~\ref{lemma_deturck} is independent of $i$ when the lemma is combined with Corollary~\ref{cor_simonett}.
Substituting the result into \eqref{equ_vol_M(s)}, we have
\begin{align*}
    \vol_{h_i(t)}(M(s_i))-\vol_{h}(M(s_i))\leq& \int_{t_i}^t\int_{\partial M(s_i)} Ce^{-\omega (\tau-t_i)}\dvol\,d\tau\\
    \lesssim &\vol(\cup_jT_j\times \{s_i\})\int_{t_i}^te^{-\omega (\tau-t_i)}\,d\tau\\
    \lesssim& e^{-2s_i}(1-e^{-\omega (t-t_i)})\leq e^{-2s_i},
\end{align*}
for each $\omega<\min\left(1-\lambda^2,\lambda(2-\lambda)\right)$. 

Fix an arbitrary $\epsilon>0$. Since $s_i\to\infty$ as $i\to\infty$, there exists sufficiently large $i_0\in\mathbb{N}$ and $t_0>0$, such that for any $i\geq i_0$ and $t\geq t_0$, we have \begin{equation*}
     \vol_{h_i(t)}(M(s_i))-\vol_{h}(M(s_i))<\frac \epsilon 2,
\end{equation*}
and
\begin{align*}
\left|\vol_{h_i(t)}(M(s_i))-\vol_{h_0}(M(s_i))\right|
 \leq&   \int_{M(s_i)} \left|\sqrt{\text{det}_{h_0}(h_i(t))}-1\right|\dvol_{h_0}\\
 \approx&\frac12\int_{M(s_i)}\left|\tr_{h_0}\left(h_i(t)-h_0\right)\right|\dvol_{h_0}\\
 \lesssim& \int_{M(s_i)}e^{-\omega (t-t_i)}\dvol_{h_0}\\
 \lesssim& e^{-\omega (t-t_i)} \vol_{h_0}(M)<\frac \epsilon 2.
\end{align*}
Therefore, \begin{equation*}
    \vol_{h_0}(M(s_i))<\vol_{h}(M(s_i))+\epsilon.
\end{equation*}
This implies \begin{equation*}
    \vol_{h_0}(M)=\lim_{i\to\infty} \vol_{h_0}(M(s_i))\leq \lim_{i\to\infty}\vol_{h}(M(s_i))=\vol_h(M).
\end{equation*}

\subsection{Proof of rigidity}\label{subsection_rigidity}
\begin{enumerate}[(I)]
    \item 
Let $\epsilon$ be a sufficiently small constant. By Theorem~\ref{thm_ricci_flow},
 if $h$ satisfies $\Vert h-h_0\Vert _{C^2(M)}\leq \epsilon$, then the long-time existence of the normalized Ricci-DeTurck flow was established in that theorem. Denote the DeTurck flow starting from $h$ by $h(t)$. Moreover, we have $$\Vert h(t)-h_0\Vert_{\mathfrak{h}_{\lambda,s_0}^{2+\rho}}\leq \frac{c\epsilon}{t^{1-\alpha}}e^{-\omega t},$$
 where $s_0>0$ is a fixed spatial parameter. Applying Lemma~\ref{lemma_deturck}, we obtain a constant $C$ so that 
  \begin{equation*}
        |V(h(x,t))|\leq Ce^{-\omega t+\lambda r(x)}\quad \forall x\in M,\,t\geq 0.
    \end{equation*}
    Since $h$ is close to $h_0$ in $C^2$, for $x\in \cup_jT_j\times\{s\}$ with $s\geq s_0$, $r(x)$ is approximately $s-s_0$, and therefore bounded above by $2(s-s_0)$.
Hence, the volume of $M(s)$ with respect to $h(t)$ satisfies the following inequality. 
\begin{align*}
   & \vol_{h(t)}(M(s))-\vol_{h}(M(s))+\int_{0}^t\int_{M(s)} (R(h(\tau))+6)\dvol\,d\tau\\\nonumber
   =&\int_{0}^t\int_{\partial M(s)}\langle V(h(\tau)),\nu\rangle \dvol\,d\tau\\\nonumber
    \lesssim &e^{-2s+2\lambda(s-s_0)}\int_{0}^te^{-\omega \tau}\,d\tau\\
    \lesssim & e^{-2(1-\lambda)s}\to 0,\quad s\to\infty.
\end{align*}
Then as argued previously, \begin{equation*}
    \vol_{h_0}(M)= \vol_h(M)-\int_{0}^\infty\int_{M} (R(h(\tau))+6)\dvol\,d\tau\leq \vol_h(M).
\end{equation*}

Suppose that the equality $\vol_{h_0}(M)=\vol_h(M)$ holds, it implies that \begin{equation*}
    \int_{M} (R(h(t))+6)\dvol =0\quad\forall t\geq 0.
\end{equation*}
Since $R(h(t))\geq -6$, we obtain that \begin{equation*}
   R(h(t))\equiv -6.
\end{equation*}
Consider the corresponding normalized Ricci flow $\Tilde{h}(t)=\Phi(t)^*h(t)$ with $\Tilde{h}(0)=h$, where $\Phi(t)$ is a family of diffeomorphisms on $M$ with $\Phi(0)=Id$. 
Under the Ricci flow, we also have \begin{equation*}
     R(\Tilde{h}(t))\equiv -6.
\end{equation*}
Together with the evolution equation of the scalar curvature \begin{equation*}
    \frac{d}{dt}R(\Tilde{h}(t))=\Delta R(\Tilde{h}(t))+2|Ric(\Tilde{h}(t))|^2+4R(\Tilde{h}(t)),
\end{equation*}
it shows that $Ric(h)\equiv -2h$ on $M$. Consequently, $h$ is hyperbolic and therefore isometric to $h_0$.

\item If $h$ is asymptotically cusped of order $k\geq 2$, then there exists a normalized Ricci flow $h(t)$ with bubbling-off, starting from $h$ and defined for all time. Therefore, it is not necessary to modify the initial metric as in \eqref{equ_h_i} or to run the Ricci flow starting from different modified initial data. We obtain 
\begin{equation}\label{equ_rigidity_asymp_cusped}
    \frac{d}{dt}\vol_{h(t)}(M(s))
   = -\int_{M(s)} (R(h(t))+6)\dvol,
\end{equation}
provided that there is no surgery in $M(s)$. 

Suppose that the equality $\vol_{h_0}(M)=\vol_h(M)$ holds, by the inequality in Theorem~\ref{thm_volume_comparison}, it follows that \begin{equation}\label{equ_rigidity_asymp_cusped_2}
    \vol_{h(t)}(M) = \vol_{h_0}(M),
\end{equation}
and no surgery can occur. Otherwise, \eqref{equ_rigidity_asymp_cusped} would yield a metric $h(t)$ that violates the inequality.

Assume that $h$ is not hyperbolic. By the maximum principle, we have $R(h(t))\geq -6$ for $t\geq0$. Moreover, by the strong maximum principle, we see that if for $t>0$, $R(h(t))$ is equal to $-6$ at an interior point, then $R(h(t))\equiv-6$ and $\overset{\circ}{Ric}\equiv 0$, which in turn implies that $h(t)$ would be hyperbolic. Since this contradicts $h$ not being hyperbolic, for a fixed compact set $M(s_0)$ and a closed time interval $[1,2]$, there exists $\delta>0$ so that \begin{equation*}
    R(h(t)|_{M(s_0)})\geq -6+\delta,\quad \forall t\in [1,2].
\end{equation*}
Substituting this into \eqref{equ_rigidity_asymp_cusped}, we have \begin{equation*}
    \vol_{h(2)}(M(s_0))\leq e^{-\delta}\vol_{h(1)}(M(s_0)).
\end{equation*}
On the other hand, we must have \begin{align*}
    &\vol_{h(2)}(M\setminus M(s_0))-\vol_{h(1)}(M\setminus M(s_0))\\
    =&-\int_{1}^2\int_{M\setminus M(s_0)} (R(h(\tau))+6)\dvol\,d\tau\leq 0.
\end{align*}
Hence, it follows that $\vol_{h(2)}(M)<\vol_{h(1)}(M)=\vol_{h_0}(M)$, which contradicts \eqref{equ_rigidity_asymp_cusped_2}. 
Consequently, the metric $h$ is hyperbolic and isometric to $h_0$. 
\end{enumerate}

\section{Applications}\label{section_applications}
Using Theorem~\ref{thm_volume_comparison}, we can generalize certain results from \cite{Agol-Storm-Thurston}, which apply to closed hyperbolic 3-manifolds, to the case of finite volume.
\begin{Cor}
    Let $(M,h)$ be a finite-volume 3-manifold with a smooth metric $h$ such that $R(h)\geq -6$, and the boundary of $M$ is a closed minimal surface. Assume that $DM$, the double of $M$ along its boundary, admits a hyperbolic metric $h_0$. Then \begin{equation*}
        \vol_h(M)\geq \frac12\vol_{h_0}(DM)=\frac{1}{2}v_3\Vert DM\Vert ,
    \end{equation*} 
    where $v_3\Vert DM\Vert $ denotes the simplicial volume of $DM$. 
    
    Furthermore, suppose that $h$ either satisfies $\Vert h-h_0\Vert _{C^2(M)}\leq \epsilon$ for a given constant $\epsilon>0$, or it is asymptotically cusped of order at least two. If the equality holds, then $h$ has constant sectional curvature $-1$ and the boundary of $M$ is totally geodesic with respect to $h$.
\end{Cor}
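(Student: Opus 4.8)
The plan is to deduce the Corollary from Theorem~\ref{thm_volume_comparison} applied to the closed (or asymptotically cusped) double $DM$. First I would recall the doubling construction of Agol--Storm--Thurston: since $\partial M$ is a closed minimal surface, the metric $h$ on $M$ glues with its mirror copy along $\partial M$ to produce a metric on $DM$ that is $C^{1,1}$ (continuous, with matching first fundamental forms and opposite second fundamental forms that cancel because $\partial M$ is minimal), and whose distributional scalar curvature satisfies $R\geq -6$ in the appropriate weak sense. One then smooths this metric in a small neighborhood of $\partial M$, at the cost of an arbitrarily small change in volume, to obtain a genuine smooth metric $\widehat h$ on $DM$ with $R(\widehat h)\geq -6-\eta$ for any $\eta>0$; after rescaling by $(1+\eta')$ one arranges $R\geq -6$ exactly while changing the volume by $O(\eta')$. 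Passing to the limit $\eta,\eta'\to 0$ and invoking the volume bound $\vol_{\widehat h}(DM)\geq \vol_{h_0}(DM)$ from Theorem~\ref{thm_volume_comparison}, together with $\vol_{\widehat h}(DM)\to 2\vol_h(M)$, yields $2\vol_h(M)\geq \vol_{h_0}(DM)$, which is the asserted inequality (the identification $\vol_{h_0}(DM)=v_3\Vert DM\Vert$ is Gromov--Thurston).

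For the rigidity statement, suppose equality $\vol_h(M)=\tfrac12\vol_{h_0}(DM)$ holds, and suppose in addition that $h$ is either $C^2$-close to $h_0$ on $M$ or asymptotically cusped of order $\geq 2$; in either case the doubled metric on $DM$ inherits the corresponding hypothesis (one must check that $C^2$-closeness of $h$ to the restriction of $h_0$ to $M$ translates, after the arbitrarily small smoothing near $\partial M$, into $C^2$-closeness on $DM$ — this is where the equality case needs the smoothing to be done quantitatively, so that one can still apply the rigidity clause of Theorem~\ref{thm_volume_comparison} in the limit). Granting this, the rigidity part of Theorem~\ref{thm_volume_comparison} forces the (limiting) doubled metric to be isometric to $h_0$, hence hyperbolic. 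It follows that $h$ itself has constant sectional curvature $-1$ on the interior of $M$; and since the gluing locus $\partial M$ is the fixed-point set of an isometric involution of the hyperbolic manifold $DM$, it must be totally geodesic, so $\partial M$ is totally geodesic with respect to $h$.

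The main obstacle is the equality case of the smoothing step. For the bare inequality one only needs that the smoothing changes the volume by a vanishing amount while keeping the scalar curvature lower bound up to a vanishing error, which is standard (this is essentially the content of \cite{Agol-Storm-Thurston}). But in the rigidity case one cannot afford to take a limit of metrics and then apply rigidity, because rigidity is not a closed condition: a sequence of non-isometric metrics can converge to $h_0$. So one must instead argue that equality in the Corollary forces $R(\widehat h)\equiv -6$ and vanishing traceless Ricci along the Ricci-DeTurck flow directly on $DM$ — i.e. one re-runs the rigidity argument of Section~\ref{subsection_rigidity} (using the strong maximum principle, as in case (II), or the evolution of scalar curvature, as in case (I)) for the doubled flow, concluding that the smoothed doubled metric was already hyperbolic for each approximation, and then letting the smoothing parameter go to zero with the conclusion ``isometric to $h_0$'' in hand at each stage. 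One then reads off that $\overset{\circ}{Ric}(h)\equiv 0$ and $R(h)\equiv -6$ on the interior of $M$, giving constant curvature $-1$, and totally-geodesic boundary follows from the involution symmetry as above.
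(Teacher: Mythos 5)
Your inequality argument is essentially the paper's: double across the minimal boundary, smooth near $\Sigma=\partial M$ as in \cite[Proposition 4.2]{Agol-Storm-Thurston} keeping $R\geq -6$ (up to a harmless rescaling) with arbitrarily small volume change, apply Theorem~\ref{thm_volume_comparison} to the smoothed metrics on $DM$, and pass to the limit; the identification $\vol_{h_0}(DM)=v_3\Vert DM\Vert$ is standard. One small correction: minimality of $\partial M$ only makes the mean curvatures cancel, not the second fundamental forms, so the doubled metric is merely Lipschitz, not $C^{1,1}$; the distributional scalar curvature bound (and hence the AST smoothing) still goes through, so this does not affect the inequality.

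The rigidity part, however, has a genuine gap. You correctly observe that one cannot take a limit of the smoothed metrics and then invoke the rigidity clause of Theorem~\ref{thm_volume_comparison} (the limit is only Lipschitz, and rigidity is not closed under $C^0$ limits), but your proposed repair --- ``concluding that the smoothed doubled metric was already hyperbolic for each approximation'' --- does not follow. The equality $\vol_h(M)=\tfrac12\vol_{h_0}(DM)$ is an equality only for the singular doubled metric; each smooth approximant $h_i$ (or $\widehat h$) satisfies merely $\vol_{h_i}(DM)\geq \vol_{h_0}(DM)$, possibly strictly, so re-running the rigidity argument of Section~\ref{subsection_rigidity} on the flow of $h_i$ yields nothing: there is no reason $R(h_i(t))\equiv -6$ for any fixed $i$. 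The paper closes this gap differently, using \cite[Theorem 6.1]{Agol-Storm-Thurston}: one constructs a family $\lbrace h(t)\rbrace_{0\le t\le T}$ attached to the singular doubled metric itself (as a limit of the flows of the $h_i$), which for $t\geq t_0>0$ is diffeomorphism-conjugate to a smooth normalized Ricci flow with $R\geq -6$ and converges to $h$ as $t_0\to 0$. The volume chain $\vol_h(DM)\geq \vol_{h(t)}(DM)\geq \vol_{h_0}(DM)$ then collapses to equalities for every $t\in[0,T]$ under the equality hypothesis, forcing $R\equiv -6$ along this flow; the argument of Section~\ref{subsection_rigidity} then gives vanishing traceless Ricci, hence constant curvature $-1$ on $M$, and the fact that the doubled metric is then smooth across $\Sigma$ (equivalently, your involution argument) gives the totally geodesic boundary. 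Your write-up needs this intermediate flow for the singular metric (or some substitute carrying the equality information before the smoothing limit); as written, the rigidity step does not go through.
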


\begin{proof}
We follow the strategy of \cite{Agol-Storm-Thurston}. We double the manifold $(M,h)$ metrically across its boundary $\Sigma=\partial M$ to form the manifold $DM$, equipped with the piecewise smooth Lipchitz continuous metric obtained by two copies of $h$. We still denote the resulting metric on $DM$ by $h$. As $h$ is not smooth in general, we cannot readily apply Theorem~\ref{thm_volume_comparison} to compare its volume to $\vol_{h_0}(DM)$. Instead, as in \cite[Proposition 4.2]{Agol-Storm-Thurston}, one can modify the metric on a neighbourhood of $\Sigma$ to obtain a sequence of smooth metrics $h_i$ in $DM$ with $R(h_i)\geq -6$ and $h_i\xrightarrow[]{C^0} h$ \emph{globally}. Moreover by following \cite[Theorem 6.1]{Agol-Storm-Thurston} one constructs families of metrics $\lbrace h(t)\rbrace _{0\leq t\leq T}, \lbrace h_i(t)\rbrace _{0\leq t\leq T}$ so that $h(0)=h, h_i(0)=h_i$ and so that for any $t_0>0$ the families $\lbrace h(t)\rbrace_{t_0\leq t\leq T}, \lbrace h(t)\rbrace _{0\leq t\leq T}$ are diffeomorphism-conjugate to a normalized Ricci flows with $R\geq -6$. As $t_0\to 0$, $h(t_0)$ converges uniformly on compact sets to $h$, while as $i\rightarrow+\infty$ we have that $h_i(t)\xrightarrow[]{C^0} h(t)$ for any $0\leq t\leq T$. Applying Theorem~\ref{thm_volume_comparison} to $(DM,h_i(t))$ we obtain
\begin{equation*}
        \vol_{h_i}(M)= \frac{1}{2}\vol_{h_i}(DM) \geq \frac{1}{2}\vol_{h_i(t)}(DM) \geq \frac{1}{2}\vol_{h_0}(DM)=\frac{1}{2}v_3\Vert DM\Vert .
\end{equation*}
Taking $i\rightarrow+\infty$ we get
\begin{equation*}
        \vol_{h}(M)= \frac{1}{2}\vol_{h}(DM) \geq \frac{1}{2}\vol_{h(t)}(DM) \geq \frac{1}{2}\vol_{h_0}(DM)=\frac{1}{2}v_3\Vert DM\Vert .
\end{equation*}
When the equality holds, we must have $\vol_{h(t)}(DM)=\vol_{h_0}(DM)$ for any $0\leq t\leq T$. 
By the proof of Theorem~\ref{thm_volume_comparison} we must have that $R\equiv -6$ for all time. As in Section~\ref{subsection_rigidity} this means that $M$ has constant sectional curvature equal to $-1$, and the gluing between two copies of $M$ produces a smooth metric along the boundary, therefore the boundary is totally geodesic. 
\end{proof}

\begin{Cor}
    Let $(M,h_0)$ be a hyperbolic 3-manifold of finite volume, and let $S$ be an embedded essential surface in $M$. Suppose that $h$ is a metric on $M$ with $R(h)\geq -6$. Then \begin{equation*}
        \vol_h(M)\geq \frac{1}{2}v_3\Vert D(M\backslash\backslash S)\Vert ,
    \end{equation*}
    where $M\backslash\backslash S$ is the Riemannian manifold obtained by taking the path metric completion of $M\setminus S$.
\end{Cor}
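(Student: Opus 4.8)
The plan is to follow the strategy of Agol--Storm--Thurston \cite{Agol-Storm-Thurston}, using the previous corollary in place of their appeal to Perelman's theorem for closed hyperbolic manifolds. Since $M$ is hyperbolic it is irreducible, so, as $S$ is essential, by the minimal surface theory of Sacks--Uhlenbeck, Schoen--Yau and Freedman--Hass--Scott the surface $S$ can be isotoped to an embedded least-area minimal surface $\Sigma$ with respect to $h$. When $M$ has cusps, essentiality --- together with the monotonicity formula and the horotori of the cusps as barriers --- prevents an area-minimizing sequence from sliding off into an end, so $\Sigma$ remains in a fixed compact part and is genuinely isotopic to $S$. Cutting $M$ along $\Sigma$ produces a manifold $N := M\backslash\backslash\Sigma$, diffeomorphic to $M\backslash\backslash S$, with $\vol_h(N)=\vol_h(M)$, still satisfying $R(h)\geq -6$, and whose boundary consists of copies of the minimal surface $\Sigma$ and is therefore itself minimal.

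If $D(M\backslash\backslash S)$ admits a finite-volume hyperbolic metric, we are then done: applying the first corollary to $(N,h)$ gives
\[
\vol_h(M)=\vol_h(N)\;\geq\;\frac{1}{2}\,v_3\,\Vert DN\Vert\;=\;\frac{1}{2}\,v_3\,\Vert D(M\backslash\backslash S)\Vert.
\]
In general $D(M\backslash\backslash S)$ need not be hyperbolizable, but by Gromov's additivity of simplicial volume along incompressible tori and its vanishing on graph-manifold pieces, $v_3\Vert D(M\backslash\backslash S)\Vert$ equals the total volume of the hyperbolic pieces of its geometric decomposition. One then runs the argument of the first corollary directly on $DN$: double $N$ across $\partial N$ to the piecewise-smooth Lipschitz metric built from two copies of $h$, smooth it near the doubling locus to a sequence $h_i\xrightarrow[]{C^0}h$ with $R(h_i)\geq -6$, and evolve by the normalized Ricci flow with bubbling-off; volume is non-increasing and $R\geq -6$ is preserved, while Perelman's analysis of the long-time behaviour, in the finite-volume form of \cite{Bessieres-Besson-Maillot,BBB+10}, forces the volume to converge to the total volume of the hyperbolic pieces, i.e.\ to $v_3\Vert DN\Vert$. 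Hence $\vol_{h_i}(DN)\geq v_3\Vert DN\Vert$, and letting $i\to\infty$ yields $2\vol_h(M)=\vol_h(DN)\geq v_3\Vert D(M\backslash\backslash S)\Vert$. If $S$ is one-sided one first replaces it by the two-sided boundary $\widetilde S$ of a tubular neighborhood, discarding the twisted $I$-bundle, whose double is a surface bundle over $S^1$ and contributes nothing to the simplicial volume; the separating and non-separating cases are handled componentwise, exactly as in \cite{Agol-Storm-Thurston}.

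The main obstacle is precisely the non-hyperbolizable case of the double: there Theorem~\ref{thm_volume_comparison} and the first corollary do not apply verbatim, and one needs the full strength of Perelman's study of Ricci flow with surgery --- in the version available for finite-volume manifolds via Ricci flow with bubbling-off --- to know that the limiting volume is \emph{exactly} $v_3\Vert D(M\backslash\backslash S)\Vert$ and not merely bounded below by it. The secondary difficulty is the minimal-surface step in the non-compact setting, where one must rule out concentration of area in the cusps; this is where essentiality (and $\partial$-incompressibility, when $S$ meets the cusps) enters, again following \cite{Agol-Storm-Thurston}. The remaining points --- orientability and two-sidedness reductions, the bookkeeping for separating versus non-separating $S$, and comparing simplicial volumes before and after discarding collapsible pieces --- are routine.
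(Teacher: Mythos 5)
Your proposal takes a genuinely different route from the paper, and as written it has real gaps. The paper's proof is two lines: Theorem~\ref{thm_volume_comparison} gives $\vol_h(M)\geq \vol_{h_0}(M)$, and then Theorem 9.1 of \cite{Agol-Storm-Thurston}, applied to the \emph{hyperbolic} metric $h_0$, gives $\vol_{h_0}(M)\geq \frac{1}{2}v_3\Vert D(M\backslash\backslash S)\Vert$. You instead try to reprove the Agol--Storm--Thurston bound directly for the arbitrary metric $h$, by cutting along an $h$-least-area representative of $S$ and then doubling and flowing. Two steps do not go through. First, the minimal-surface step: $h$ is only assumed to satisfy $R(h)\geq -6$; it need not be complete, cusp-like, or carry any mean-convex barriers near the ends, so the least-area machinery does not produce an embedded minimal surface isotopic to $S$, and the horotorus-barrier argument you invoke to prevent escape into the ends uses precisely the hyperbolic (or at least cusp-like) structure that $h$ lacks. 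In Agol--Storm--Thurston this step is carried out for the hyperbolic metric, not for an arbitrary scalar-curvature-bounded one. Second, in the non-hyperbolizable case of the double you appeal to Ricci flow with bubbling-off on $DN$ starting from a smoothing of the doubled $h$, together with the claim that the volume converges exactly to the total volume of the hyperbolic pieces; but the construction of \cite{Bessieres-Besson-Maillot} requires cusp-like (bounded geometry) initial data, which an arbitrary $h$ does not supply, and the paper's first corollary is only formulated when the double admits a hyperbolic metric. There is also a boundary issue: if $S$ meets the cusps, $\partial(M\backslash\backslash S)$ is not a closed surface, so the first corollary does not apply verbatim.

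The efficient fix is the paper's: all the geometric work (minimal surfaces, barriers in the cusps, doubling, Ricci flow on the double, the non-hyperbolizable pieces) is done once and for all for the finite-volume hyperbolic metric in Theorem 9.1 of \cite{Agol-Storm-Thurston}, which can be quoted as a black box, while the only new input needed from this paper is the comparison $\vol_h(M)\geq \vol_{h_0}(M)$ of Theorem~\ref{thm_volume_comparison}, which requires nothing about $S$.
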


\begin{proof}
By Theorem~\ref{thm_volume_comparison},
    \begin{equation*}
        \vol_h(M)\geq \vol_{h_0}(M)\geq \frac{1}{2}v_3\Vert D(M\backslash\backslash S)\Vert ,
    \end{equation*}
    the latter inequality follows by Theorem 9.1 of \cite{Agol-Storm-Thurston}. 
\end{proof}

\bibliographystyle{plain} 
\bibliography{ref}   
\end{document}